%Modification ICJ-Preprint

\documentclass[11pt,a4paper]{article}
\usepackage[latin1]{inputenc}
\usepackage[TS1,T1]{fontenc}
\usepackage[english]{babel}
\usepackage{amsfonts,amssymb,latexsym,xspace}
\usepackage{amsmath,enumerate}
\usepackage{amsthm}
\usepackage{amscd}
\usepackage{dsfont}
\usepackage{verbatim}
\usepackage{graphics}
\usepackage{graphicx,pstricks}
\usepackage{amsfonts,latexsym,epsf}
\usepackage[all]{xy}

\numberwithin{equation}{section}

\newcommand{\ds}{\displaystyle}
\newcommand{\p}{\mathcal{P}}
\newcommand{\D}{\mathcal{D}}

\newcommand{\h}{{\check\rho^*}}
\newcommand{\Z}{\mathbb{Z}}
\newcommand{\N}{\mathbb{N}}

\newcommand{\R}{\mathbb{R}}
\newcommand{\T}{\mathcal{T}}

\newcommand{\E}{\mathcal{E}}

\newcommand{\A}{\mathcal{A}}

\newcommand{\rn}{{\R^d}}
\newcommand{\im}{\mbox{\rm im}}
\def \marginpar#1{}
\newcommand{\APG}{Anderson-Putnam-G\"ahler}
\newcommand{\PEPV}{\phi}
\newcommand{\iso}{\psi}
\newcommand{\f}{\eta}

\newcommand{\CC}{\mathcal C}
\renewcommand{\t}{\textsc{t}}

\newtheorem{thm}{Theorem}[section]

\newtheorem{cor}[thm]{Corollary}

\newtheorem{lem}[thm]{Lemma}

\newtheorem{df}[thm]{\bf Definition}
\newtheorem{rem}[thm]{\bf Remark}

\sloppy
\def\pn {\par \noindent}

\title{Comparing different versions of tiling cohomology}

\author{Housem Boulmezaoud and Johannes Kellendonk\\
${}$\\
{\small Universit\'e de Lyon, Universit\'e Claude Bernard Lyon 1,}\\
{\small Institut Camille Jordan, CNRS UMR 5208,}\\ 
{\small 43 boulevard du 11 novembre 1918, F-69622 Villeurbanne cedex}}

\date{\today}
%\keywords{cohomology groups, tilings, deformations}

\date{April 6, 2010}
\begin{document}
\maketitle

\begin{abstract} We establish direct isomorphisms between different versions of tiling cohomology. The first version is the direct limit of the cohomologies of the approximants in 
the Anderson-Putnam-G\"{a}hler system, the second is the recently introduced PV-cohomology of Savinien and Bellissard and the third is pattern equivariant cohomology.
For the last two versions one can define weak cohomology groups. We show that the 
isomorphisms extend to the weak versions. 
This leads to an alternative formulation of the pattern equivariant mixed quotient group which describes deformations of the tiling modulo topological conjugacy. 
\end{abstract}

%\tableofcontents
%==================================================================%
\section{Introduction.}
%==================================================================%
There exist various isomorphic pictures of what one now calls tiling
cohomology. Originally it was defined as the groupoid
 cohomology of the tiling groupoid \cite{K0}
or as the \v{C}ech cohomology of the tiling space (the hull)
\cite{AP}. If the tiling space can be seen as an inverse limit of finite
simplicial (or cellular) complexes.
%\footnote{This is at least in the case for tilings with finite local
%complexity  
%where one may assume without loss of generality that
%the tilings yield simplicial decompositions of their ambient space.}
the cohomology of the tiling space is
a direct limit of simplicial (or cellular) 
cohomology groups  \cite{AP}, \cite{G,S1}.
An intuitively simpler version was later proposed using
 the concept of pattern equivariant exterior forms \cite{K1,KP} and then
 modified using pattern equivariant simplicial (or cellular)
cochains \cite{S2}. Finally, a
 recent version making full use of the simplicial structure 
 defined by the tiling goes under the name of PV-cohomology
 \cite{BS}. Each version has his own advantages and disadvantages.
 Computationally the groupoid cohomology approach was most successful
 for cut \& and project patterns \cite{FHKmem} 
and the direct limit approach 
%(via the inverse limit construction) 
for substitution tilings
\cite{AP}. 
The newer versions led conceptionally to new insight, especially the pattern
 equivariant de Rham cohomology. The latter comes about as it
 naturally offers different choices of cohomology groups
based on the distinction between algebraic modules and their
topological closure. 
The original tiling cohomology is isomorphic to strongly pattern
equivariant cohomology, the algebraic version. 
The topologically closed version is the so-called weakly pattern
equivariant cohomology, and this one is isomorphic to 
the tangential cohomology of the hull (seen as a lamination)
\cite{KP}. It has been shown that the theory of deformations of
tilings is related to 
the mixed group of strongly pattern equivariant cochains modulo weakly
pattern equivariant coboundaries \cite{K2}. It is therefore of quite 
some interest to determine the latter. However, apart from
substitution tilings \cite{CS}, we have at present no general method
to offer to compute this mixed group. 

The present article sheds new light on the different versions of
pattern equivariant cohomology in that it provides analogues of the
weak and the mixed version in the framework of PV-cohomology.
To obtain these we provide explicit descriptions of the isomorphisms
between the various cohomology groups. We
establish in particular an explicit formula for the
isomorphism between pattern equivariant de Rham cohomology and
PV-cohomology and show that it carries the right continuity properties
for it to extend from the algebraic to the topological closed
versions of the cohomology groups.

%==================================================================%
\section{Preliminaries.}
%==================================================================%
In this section we provide some known background material on tilings.
In particular we recall the definitions of the following
%alternative (besides \v{C}ech-cohomology) 
versions of tiling cohomology: \begin{enumerate}
\item as direct limit of the simplicial cohomologies of the 
Anderson-Putnam-G\"ahler complexes 
  \cite{AP,G,S1} (we refer to that simply as DL-cohomology), 
\item as PV cohomology \cite{BS},
\item as PE de Rham cohomology \cite{K1,KP},
\item as PE simplicial cohomology \cite{S2}.
\end{enumerate}
We also recall or introduce weak versions of PV and PE cohomology.
PE stands for {\em pattern equivariant}, s-PE for {\em strongly} and
w-PE for {\em weakly}  pattern equivariant.
\subsection{Tilings and tilings spaces}
A tiling of the euclidean space $\R^d$ is a covering of the space by
closed topological disks (its tiles) which overlap at most on their
boundaries. We consider here simplicial tilings 
of  (translational) finite local complexity. 
This means that the tiles are simplices of $\R^d$ which touch face to
face and that there are only finitely many simplices up to
translation; we present more details in Section~\ref{sec-simplicial}. 
In the context of tilings of  finite local complexity the restriction
to simplicial tilings is topologically not a restriction, since there
are ways to associate to any tiling of finite local complexity a
simplicial tiling which carries the same topological information.

% Besides viewing a tiling as the collection of its tiles we also view
% it as a simplicial decomposition of the space, or as the subset of
% $\R^d$ given by the $d\!-\!1$-skeleton of the simplicial complex,
% i.e.\ as the set of boundaries of its tiles.  

The tiling space or hull  of a tiling $\p$ is
$$\Omega_\p:={\overline{\{\t^x(\p)|x\in\R^d\}}^D},$$
the completion of the translation orbit of $\p$ w.r.t\ the metric given by
$$D(\p_1,\p_2)=\inf_{r>0}\{\frac{1}{r+1}|\exists x,y\in
B_\frac{1}{r}(0):B_r[\t^x(\p_1)]=B_r[\t^y(\p_2)]\}$$ 
where $\p_1$ and $\p_2$ are tilings of $\R^d$.
Here $\t^x$ denotes the translation action $\t^x(Q) = Q-x$ where $Q$
is a point or a geometric object and
we use the notation $B_r[\p]=B_r[\p']$ to indicate that
the tiling $\p$ and $\p'$ coincide\footnote{e.g.\ their sets of  
 boundary points of their tiles coincides on the $r$-ball around $0$}
on the $r$-ball around $0$.

Tiling spaces can be obtained as inverse limits of simpler spaces.
Such a construction was given for substitution tilings by \cite {AP}
and then later generalized to all tilings of finite local complexity
%by G\"ahler 
in \cite{G,S1} and a somewhat similar construction given in \cite{BBG}.
We present the details of the construction using $k$-neighborhoods.

Let $\p$ be a tiling of $\R^d$ and $t$
be the tile (or the patch) of $\p$. The
first corona of $t$ is the set of its nearest neighbors, the second
corona of $t$ is the set of the second nearest neighbors, and so on
out to the $k^{th}$ corona. The $k$-neighborhood of a tile is the collection of its $j$th coronae for 
$j\leq k$.
Consider two tiles $t_1$,
$t_2$ in $\p$, we say that $t_1$ is equivalent to $t_2$, if $t_1$ is
translationally congruent to $t_2$, the equivalence class is called
prototile. The condition of finite local complexity implies that
there are finitely many prototiles. A $k$-collared tile is a tile
labelled with its $k$-neighborhood. The labelling of the tile does
not change the set covered by the tile, which we call its support, but it is only a decoration of it.
A $k$-collared prototile is the translational
congruence class of the $k$-collared tile. Again, finite local
complexity implies that the number of $k$-collared prototiles is
finite. Let $f_1$ and $f_2$ to be faces of two $k$-collared
prototiles $\tilde{t}_1$ and $\tilde{t}_2$. We say $f_1$ is
equivalent to $f_2$ if $\tilde{t}_1$ has a representative $t_1$ and
$\tilde{t}_2$ has a representative $t_2$ in $\p$ such that the edges
corresponding to $f_1$ and $f_2$ coincide. The disjoint union of the
supports of the $k$-collared prototiles quotiened out by this
equivalence relation on the faces is the set $\Gamma_k$. Equipped with
the quotient topology we consider it as a topological space. It
inherits the structure of a complex from $\p$ and is therefore called an
\APG\ complex.   
%It moreover carries the structure of a branched flat manifold.

Let
$\rho_{k+1\,k}:\Gamma_{k+1}\longrightarrow\Gamma_k$ the map which
associates to a point 
of a $k+1$-collared prototile the same point in the $k$-collared
prototile obtained by forgetting about the $(k+1)^{th}$ corona in its label. 
The inverse system
$(\Gamma_k,\rho_{k \,k-1})_{k\in\N}$ is called the
Anderson-Putnam-G\"{a}hler system.  
Its importance lies in the fundamental result that its inverse limit
is homeomorphic to the tiling space
$$\Omega_\p \cong \lim\limits_{\substack{ \longleftarrow \\ k}}{\Gamma_k}.$$
A point of $\Gamma_k$ tells us how to place a patch corresponding to
a $k$-collared tile around the origin of $\R^d$. By definition of the
inverse limit the surjections $\rho_{k \,k-1}$ induce a continuous surjection
\begin{equation}\label{gamma}
\rho_k:\Omega_\p\longrightarrow \Gamma_k
\end{equation}
which can be described as follows.
Consider the tile of $\omega\in\Omega_\p$ together with its $k$-neighborhood
which lies on the origin $0_{\R^d}$. Taking congruence
classes w.r.t.\ translations, the point in the tile above corresponds
to a point in a $k$-collared prototile. $\rho_k(\omega)$ is this
point viewed as a point in $\Gamma_k$. By the definition of the
equivalence for $\Gamma_k$, this is well defined also in the cases in
which a boundary 
point of a tile lies on the origin.
Related to the above is the surjection
$\tilde\rho_k:\R^d\longrightarrow \Gamma_k$
\begin{equation}\label{eq-trho}
\tilde\rho_k(x) = \rho_k(\t^x(\p)).
\end{equation}
\subsubsection{Simplicial tilings and $\Delta$-complexes}\label{sec-simplicial} 
For their definition of PV-cohomology Savinien and Bellissard consider
tilings whose tiles are finite $\Delta$-complexes which are compatible
in the sense that the intersection of two tiles are
sub-$\Delta$-complexes. Intuitively the reader should simply have in
mind a tiling
whose tiles are simplices which meet face to face and such that
simplices can be pattern equivariantly oriented. 
We call such tilings simplicial tilings.

In more technical tems, a $\Delta$-complex structure (see \cite{H})
on a topological space $X$ is a collection of continuous maps 
$\sigma:\Delta^n\to X$, where
$$\Delta^n=\{(x_1,x_2,\cdots,x_{n+1})\in\R^{n+1} | \sum_{i=1}^{n+1} x_i =
1, x_i\geq 0\}$$  
is the standard $n$-simplex in euclidean space. It is supposed that
these maps are injective on the interior of $\Delta^n$.
They are referred to as the characteristic maps and their images are the so-called cells (of dimension $n$) of $X$.
It is also supposed that each point of $X$ lies in the interior of exactly one cell.
$\Delta^n$ has $n+1$ faces of dimension $n-1$, we write
$\partial_i\Delta^n$ for the face described by coordinates
$(x_1,x_2,\cdots,x_{n+1})$ for which $x_i=0$ and call it the $i$th
face. We write $\partial_i\sigma$ for the
restriction of $\sigma$ to $\partial_i\Delta^n$. 
It is also a characteristic map,
but for an $n-1$ cell.
When cells are next to each other, touching along lower dimensional faces, 
or when characteristic functions are not injective on the boundary but identify different faces,
then the corresponding characteristic maps
of the lower dimensional faces are identified.
The vertices of $\Delta^n$ are ordered, the $i$th
vertex being the one having coordinate $x_i=1$, and this order passes
to the faces. It is assumed that the identification
of characteristic maps %due to touching cells 
preserves the ordering of the vertices.
They are further compatibility conditions among the
characteristic maps for which we refer to \cite{H,BS}.
We will use the phrasing "the simplex $\sigma$ of $X$" 
to refer to the characteristic map or to the cell it defines in $X$.

A simplicial complex is special kind of $\Delta$-complex. In \cite{ST}
this notion is used for $\Delta$-complexes discribing subsets of 
euclidean  spaces whose cells are actually simplices, i.e.\ convex hulls of
$n+1$ vertices (in general position) if $n$ is their dimension.

\begin{lem}\label{lem-simpl}
Any tiling of finite local complexity is mutually locally derivable
with a tilings whose tiles are the top-dimensional simplices of a
simplicial complex. Moreover, the simplicial structure is pattern equivariant
in the following sense. 
%There exists $R>0$ such that if $B_R[\t^x(\p)]= B_R[\t^y(\p)]$ 
If $t_1$ and $t_2$ are tiles which differ by a translation, i.e.\ $t_2 = \t^x(t_1)$ for some $x$, 
then the characteristic functions $\sigma_1$ and
$\sigma_2$ of $t_1$ and $t_2$, respectively, differ by the same translation, 
$\sigma_2=\t^x\circ\sigma_1$. 
\end{lem}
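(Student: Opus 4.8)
The plan is to produce the simplicial tiling by a pattern-equivariant \emph{barycentric subdivision} of the cell complex that $\p$ induces on $\R^d$. A tiling of finite local complexity decomposes $\R^d$ into a locally finite regular CW complex whose top cells are the tiles and whose lower cells are the faces, edges, $\dots$, vertices along which tiles meet; finite local complexity guarantees that in each dimension there are only finitely many cells up to translation. First I record the face poset of this complex, ordered by the relation ``is a face of''. Then I choose for every cell $c$ an interior point $b_c$, its barycenter, making the choice pattern equivariantly: since there are finitely many prototiles in every dimension (including codimension-one faces), I fix a barycenter on one representative of each translation class and propagate it by translation, so that whenever $\t^x(c)$ is again a cell one has $b_{\t^x(c)}=\t^x(b_c)$. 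The barycenters are taken in general position in their cells.

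Next I form the order complex of the face poset: its $n$-simplices are the saturated chains $c_0\subsetneq c_1\subsetneq\cdots\subsetneq c_n$ (each cell a face of the next), realised as the euclidean simplex spanned by $b_{c_0},\dots,b_{c_n}$. This is the classical barycentric subdivision, and because the complex is regular it is a genuine simplicial complex triangulating $\R^d$ (see \cite{H}); its top, $d$-dimensional, simplices are the tiles of the new tiling $\p'$. Ordering the vertices of each simplex by the dimension $\dim c_i=i$ of the cell they come from is intrinsic and compatible with every face inclusion, so it equips $\p'$ with the ordered $\Delta$-complex structure required in Section~\ref{sec-simplicial}. Crucially, the subdivision of a cell restricts on each of its faces to the subdivision of that face, so the triangulations of neighbouring tiles agree automatically along their common faces, and no separate matching condition has to be imposed.

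It remains to verify the two assertions. For \emph{mutual local derivability}, note that the cells meeting a bounded neighbourhood of a point, together with their chosen barycenters, are determined by $\p$ on a slightly larger ball; hence every simplex of $\p'$ is read off locally from $\p$. Conversely, each original tile is exactly the union of those simplices whose maximal chain-element is that tile, and this grouping is again locally determined, so $\p$ is recovered from $\p'$; thus $\p$ and $\p'$ are MLD. For \emph{pattern equivariance}, the characteristic map $\sigma$ of the top simplex on a chain $c_0\subsetneq\cdots\subsetneq c_d$ is the affine map sending the ordered vertices of $\Delta^d$ to $b_{c_0},\dots,b_{c_d}$. Two tiles of $\p'$ differ by a translation precisely when they are the same prototile, i.e.\ when $\t^x$ carries the defining chain of $t_1$ to that of $t_2$; since the barycenters were chosen equivariantly this sends the ordered vertex tuple of $t_1$ to that of $t_2$, and as both characteristic maps are affine this yields exactly $\sigma_2=\t^x\circ\sigma_1$.

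The step I expect to be the main obstacle is the \emph{geometric realisation}: the abstract barycentric subdivision must be realised by \emph{straight}, non-degenerate simplices that genuinely tile $\R^d$ in the sense of \cite{ST}, while the placement of barycenters stays strictly translation-covariant and turns the original topological-disk tiles into convex simplices. The clean route is to first replace $\p$ by a combinatorially isomorphic polytopal tiling (which is MLD with $\p$ and exists by finite local complexity, the local combinatorial types being finite), for which centroids lie in general position and convex hulls stay inside their cells; one may also collar sufficiently, as in \cite{G,S1}, so that the per-prototile choices are unambiguous. Checking that these finitely many choices are mutually consistent across all shared faces, and that the resulting straightening is itself locally derivable from $\p$, is where the real work lies; the combinatorial consistency of barycentric subdivision makes this a matter of bookkeeping rather than of a genuinely new idea.
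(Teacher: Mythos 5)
Your construction is fine up to and including the subdivision itself: passing to an MLD polytopal face-to-face tiling and then subdividing is essentially what the paper does too (it invokes an arbitrary MLD simplicial subdivision; your barycentric subdivision is one concrete way to perform that step, and your MLD verification is correct). The genuine gap is in the pattern equivariance of your vertex \emph{ordering}, which is the actual content of the lemma. You order the vertices of a simplex by the dimension of the cell whose interior point they are; this label is read off from the ambient complex, not from the geometric simplex, so it is not translation-invariant. Concretely, your pivotal assertion that two tiles of $\p'$ ``differ by a translation precisely when $\t^x$ carries the defining chain of $t_1$ to that of $t_2$'' is false for honest barycenters: take the tiling of $\R$ by unit intervals. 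The subdivision tiles are the half-intervals $[n,n+\frac12]$, with ordered vertices $(n,\,n+\frac12)$ coming from the chain $\{n\}\subsetneq[n,n+1]$, and $[n+\frac12,\,n+1]$, with ordered vertices $(n+1,\,n+\frac12)$ coming from $\{n+1\}\subsetneq[n,n+1]$. These two tiles are translates of each other as sets, but their chains are not translates, and their characteristic maps differ by a translation composed with the flip of $\Delta^1$; the conclusion $\sigma_2=\t^x\circ\sigma_1$ fails. Since the lemma is later used exactly to make the characteristic maps on $\Gamma_0$ well defined on bare translation classes of tiles, this cannot be waved away by decorating tiles with their chains.

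Your hedge that the barycenters be ``in general position'' points at the repair but carries the whole weight of the lemma and is never argued (and it contradicts calling them barycenters: centroids of symmetric tiles are precisely the non-generic choice that produces the counterexample above). To close the gap you would have to choose one interior offset per translation class of cells and prove that, because finite local complexity leaves only finitely many chain configurations up to translation, every coincidence $t_2=\t^x(t_1)$ whose induced vertex correspondence permutes the dimension labels imposes a nontrivial affine condition on these finitely many offsets, so a generic (still pattern-equivariant) choice avoids all of them; only then does ``translate tiles implies translate chains'' hold and your equivariance argument go through. The paper avoids this issue altogether with a different and cheaper device: it orders the vertices of every simplex by the value of $\langle\nu,\cdot\rangle$ for a unit vector $\nu$ perpendicular to no edge (such $\nu$ exists by finite local complexity, there being only finitely many edge directions). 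That order is intrinsic to the simplex and manifestly invariant under translation, so equivariance of the characteristic maps is immediate. Either import that ordering into your barycentric subdivision, discarding the dimension ordering, or supply the genericity argument in full; as written, the equivariance half of the lemma is unproven, and for the stated choice of barycenters it is false.
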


\pn\textbf{\emph{Proof.}}
It is well known that any tiling of finite local complexity is
mutually locally derivable with a tiling whose tiles are polyhedra and
match face to face. One may then subdivide the polyhedra into
simplices in a mutually locally derivable way
to obtain a tiling whose tiles are simplices
and match face to face. The issue is now to order the vertices of 
the simplices in a
pattern equivariant way. This can be done as follows: By the finite
local complexity the exists a unit vector $\nu$ in $\R^d$ (the ambient
space) such that none of the edges of the
tiles is perpendicular to $\nu$. 
Then the order on the vertices defined by $v_1\leq
v_2$ provided $\langle\nu,v_1\rangle\leq \langle\nu,v_2\rangle$ is
total and translation invariant in the sense that the pair
$(v_1-x,v_2-x)$ (if it occurs in the tiling)
has the same order than $(v_1,v_2)$.  
We may therefore define the characteristic maps to be translation
invariant in the same sense.
\qed\bigskip

We call a tiling with this property a simplicial tiling.
% \footnote{One may, for instance, introduce
%   punctures for the tiles, as in Sect.~\ref{sec-trans}, and consider the
% dual tiling of the Voronoi tiling the this set.
% Then one can  subdivide the polyhedra into simplices in a mutually 
% locally derivable way.} 
We denote by $\p^{n}$ the
$n$-skeleton of the simplicial complex it defines,
that is, $\p^{n}$ 
is the union of all faces of dimension smaller or equal to $n$. In
particular $\p^{d}=\R^d$. 

If $X$ carries in addition the structure of a differentiable manifold
and we want to integrate $n$-forms over certain $n$-dimensional
submanifolds it is useful to use charts defined by the cells.  
For that purpose we shall not only require the characteristic maps to
be diffeomorphisms, but even that they extend to diffeomorphisms from
an open neighborhood of $\Delta^n$ to an open neighborhood of the
corresponding cell;  
in the terminologie of \cite{ST} this means that the simplicial
complex defines a smooth triangulation of $X$.  
This is clearly possible for the simplicial complex defined by a
tiling in $\R^d$. 

The tiling defines a $\Delta$-complex for 
the topological spaces $X=\Gamma_k$. This is obvious from the
construction of $\Gamma_k$. In fact, if $\sigma$ is a characteristic
map of a $k$-collared tile then
$\tilde\sigma:=\tilde\rho_k\circ\sigma$ is a characteristic map for
the corresponding $k$-collared prototile in $\Gamma_k$. By 
Lemma~\ref{lem-simpl}
this is independent of the choice of representative for the
prototile. Note that $\tilde\sigma$ is not necessarily injective.
%So the $\Delta$-complex on $\Gamma_k$ is not neccessarily 
%a simplicial complex in the stricter sense. 
We denote by
$\mathcal{S}_k^n$ the set of characteristic maps of 
the $n$-simplices on $\Gamma_k$.
%\bigskip

An $n$-chain on $X$ is a formal linear combination of oriented simplices from
$X$, and we denote the set of $n$-chains by $C_n(X)$. It is thus the
free $\Z$-module generated by elements $\langle\sigma\rangle$.
An $n$-cochain with coefficients in an abelian group $A$ is an element
of the $\Z$-module $C^n(X,A)={\rm Hom}_\Z(C_n(X),A)$. 
Let $a\in A$. We denote by $a_\sigma$ the morphism defined by 
$$a_\sigma(\langle\tau\rangle) =  \left\{\begin{array}{cl} 
a & \mbox{if } \sigma = \tau\\
0 & \mbox{else}
\end{array}\right.$$.

\subsubsection{The $\Delta$-transversal of a tiling}\label{sec-trans}

The canonical transversal or discrete hull of a tiling is defined with
the help of punctures for the tiles of the tiling \cite{Ke1}. In
\cite{BS} punctures for the faces of tiles are introduced as well so
that one gets a complex defined by transversals of different dimension.

A puncture of a face of a tile is a chosen point in the face, for instance
its barycenter. The pair given by the face and its puncture is called a
punctured face. 
All punctures are supposed to be chosen in such way that two faces of
tiles in the tiling which agree up to translation have their punctures
at the same relative position.  
Then the puncturing passes to the equivalence classes so that the
simplices of $\Gamma_k$ are also punctured. 
We denote by $\p^{n,punc}$ the set of punctures of all $n$-faces of $\p$.
\begin{df}\cite{BS}
 The $n$-dimensional $\Delta$-transversal is
 $$\Xi^n=\{\p' \in\Omega_\p|0\in{\p'}^{n,punc}\}.$$
The disjoint union 
$\Xi=\dot\bigcup_{n=0}^d \Xi^n$ is called the $\Delta$-transversal of
the tiling $\p$. 
\end{df}
In \cite{BS} the notation $\Xi_\Delta$ for the $\Delta$-transversal 
was used instead which distinguishes it from the usual notation for
the canonical transversal, but we see no danger of confusion here.
The sets $\Xi^n$ are totally disconnected and compact subsets
of $\Omega_\p$. 
Their topology is generated by the following clopen subsets, referred to as
acceptance zones for faces of $\Gamma_k$.
\begin{df}\label{az} The acceptance zone of an $n$-simplex $\sigma$ of
  $\Gamma_k$ is the subset $\Xi(\sigma)$ of $\Xi^n$ given by
$$\Xi(\sigma)=\rho_k^{-1}(p_\sigma)$$
where $p_\sigma$ is the puncture of $\sigma$.
The surjections $\rho_k$ are defined in (\ref{gamma}).
\end{df}
\begin{rem}
Recall that a $d$-simplex $\sigma$ in $\Gamma_k$ corresponds to (the
congruence class of) a tile 
$t_\sigma$ in $\p$ labelled with its $k$-neighborhood. In a similar fashion a $n$-simplex
$\sigma$ in $\Gamma_k$ corresponds to an $n$-dimensional
face  of a tile in $\p$ together with its $k$-neighborhood. Then
$\Xi(\sigma)$ can be 
seen as the subset of tilings which contain a patch corresponding to a
$k$-collared face defined by $\sigma$ whose puncture lies on $0\in\R^d$.
\end{rem}

We denote the restriction of $D$ to $\Xi$ by $D_0$. The family of clopen balls
$\{\mathcal{B}_{D_0}(\mathcal{P},\varepsilon)\}_{{\mathcal{P}\in\Xi}}$,
 of radius $\varepsilon$ around $\p$, is a base of
the topology of the space $(\Xi, D_0)$. We denote
$\mathcal{B}_{D_0}^n(\p,\varepsilon) =
\mathcal{B}_{D_0}(\p,\varepsilon)\cap \Xi^n$.

\begin{lem}\label{lem-topo}
The family of acceptance zones
$\{\Xi(\sigma)\}_{\sigma\in\Gamma_k,0\leq k\leq d}$ is
a base for the topology of the metric space
$(\Xi,D_0)$.
\end{lem}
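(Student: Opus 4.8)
The plan is to verify the two conditions that characterize a base: first, that every acceptance zone $\Xi(\sigma)$ is open in $(\Xi,D_0)$, and second, that the acceptance zones refine the metric balls, i.e.\ that for every $\p'\in\Xi$ and every $\varepsilon>0$ there is an acceptance zone with $\p'\in\Xi(\sigma)\subseteq\mathcal{B}_{D_0}(\p',\varepsilon)$. Since the metric balls already form a base, these two facts together force the acceptance zones to be a base as well. Throughout I would exploit the defining property of the transversals: a point $\p'\in\Xi^n$ is a tiling carrying the puncture of an $n$-face on the origin, and $\rho_k(\p')$ is precisely the puncture $p_\sigma$ of the $k$-collared $n$-simplex $\sigma$ determined by that face.

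For openness, the point to make is that, in contrast to its behaviour on all of $\Omega_\p$, the restriction of $\rho_k$ to $\Xi$ is locally constant, so that $\Xi(\sigma)=\rho_k^{-1}(p_\sigma)$ is open. Concretely, I would fix $\p'\in\Xi(\sigma)$ and choose $r$ so large that the $r$-ball around the origin contains the entire $k$-neighborhood of the face carrying the origin-puncture. If $\p''\in\Xi^n$ is so $D_0$-close to $\p'$ that it agrees with $\p'$ on $B_r[0]$ up to a translation in $B_{1/r}(0)$, then, because both tilings carry a face-puncture exactly at the origin and punctures are uniformly separated by finite local complexity, this auxiliary translation must be trivial once $\varepsilon$ is small. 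Hence the $k$-collared $n$-face at the origin is literally the same, giving $\rho_k(\p'')=p_\sigma$ and $\p''\in\Xi(\sigma)$; this exhibits a $D_0$-ball around $\p'$ contained in $\Xi(\sigma)$.

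For the refinement, given $\p'\in\Xi^n$ and $\varepsilon>0$, I would choose $r$ with $\frac{1}{r+1}<\varepsilon$ and then $k$ so large that knowing the $k$-collared $n$-face at the origin already determines the tiling on all of $B_r[0]$; such a $k$ exists because finite local complexity forces the $j$-th corona to reach out to a distance growing with $j$, so for large $k$ the $k$-neighborhood of the central face covers $B_r[0]$. Taking $\sigma$ to be the $n$-simplex of $\Gamma_k$ with $\rho_k(\p')=p_\sigma$, every $\p''\in\Xi(\sigma)$ shares this $k$-collared central face and hence coincides with $\p'$ on $B_r[0]$, so that $D_0(\p',\p'')\leq\frac{1}{r+1}<\varepsilon$, i.e.\ $\Xi(\sigma)\subseteq\mathcal{B}_{D_0}(\p',\varepsilon)$, while trivially $\p'\in\Xi(\sigma)$.

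The two steps complete the argument. The main technical point to make rigorous is the translation bookkeeping hidden in the metric $D_0$: in both steps one must pass cleanly between ``the two tilings agree on $B_r[0]$'' and ``$D_0$ is small'', controlling the auxiliary translations $x,y\in B_{1/r}(0)$ in the definition of $D$, and invoke the uniform separation of punctures coming from finite local complexity to exclude small nonzero translations on the transversal. The quantitative dependence of the geometric radius $r$ on the combinatorial depth $k$ is the other place where finite local complexity is essential, and I expect this interplay to be the main obstacle.
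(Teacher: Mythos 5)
Your proposal is correct and follows essentially the same route as the paper's proof: you show each $\Xi(\sigma)$ is open by choosing $R$ so large that $B_R(0)$ covers the $k$-neighborhood of the face at the origin, and you show refinement by choosing $k$ so large that the $k$-neighborhood of the central face covers $B_R(0)$, exactly as in the paper's two-step argument. Your additional bookkeeping about the auxiliary translations $x,y\in B_{1/r}(0)$ in the definition of $D$ (ruled out on the transversal by uniform separation of punctures) merely makes explicit a point the paper elides when it identifies $D_0$-closeness with literal agreement on $B_R(0)$.
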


\pn\textbf{\emph{Proof.}}
Let $\sigma$ be an $n$-simplex of $\Gamma_k$ and $\T$ a tiling
in $\Xi^n(\sigma)$. Recall that $D_0(\T,\T')\leq
\frac{1}{R+1}$ means that $\T$ and $\T'$ agree on the ball $B_R(0)$ of $\R^d$
of radius $R$ around the origin. Choose $R>0$ large enough such that 
$B_R(0)$ covers the $k$-neighborhood of the face of $\T$ at the origin. Then
$D_0(\T,\T')\leq \frac{1}{R+1}$ implies that $\T'$ contains as well that face with its $k$-neighborhood
and hence belongs to the acceptance zone $\Xi(\sigma)$. We thus have proved $\exists R>0$:
$\T\in\mathcal{B}_{D_0}^n(\T,\frac{1}{R+1})\subset\Xi(\sigma)$.

\pn Conversely, let $\T$ be a tiling of $\Xi^n$
 and $R>0$. 
Choose $k$ large enough so that the $k$-neighborhood of the face $f$ at the origin of $\T$ covers $B_R(0)$. Let $\sigma_f\in\Gamma_k$ be the $n$-simplex corresponding to this $k$-collared face.
Then $\T'\in \Xi(\sigma_f)$ implies that
$\T$ and $\T'$ agree on $B_R(0)$ and hence $D_0(\T,\T')\leq
\frac{1}{R+1}$. We thus have proved that  there exists an
acceptance zone $\Xi(\sigma_f)$ such that
$\T\in\Xi(\sigma_f)\subset\mathcal{B}_{D_0}^n(\T,\frac{1}{R+1})$.
\qed\bigskip

%\begin{cor}\label{chfunc}
 %The space of the continuous integer-valued functions, on the totally disconnected set $\Xi$, is generated by characteristic functions on the acceptance zones.
%\end{cor}

We denote by $\CC_{lc}(\Xi^n,A)$ the
$\Z$-module of $A$-valued locally constant functions over
the $n$-dimensional $\Delta$-transversal. 
It constitutes the degree $n$ part of the 
complex 
defining PV cohomology which we will consider below.
% Note that this is the same as
% $\CC(\Xi^n,\Z)\otimes A$ (the tensor product is over $\Z$)
% the first one denoting .

It follows from Lemma~\ref{lem-topo} that any element of
$\CC_{lc}(\Xi^n,A)$ is a finite sum of elements 
$a_{\Xi(\sigma)}$ where $a\in A$ and 
$$ a_{\Xi(\sigma)}(\xi) = \left\{\begin{array}{cl}
a & \mbox{if } \xi\in \Xi(\sigma)\\
0 & \mbox{else}
\end{array}\right. .$$

% Note that if
% $\sigma\neq\sigma'$ then
% $\Xi_{\Delta}(\sigma)\cap\Xi_{\Delta}(\sigma')=\emptyset$ (a
% tiling cannot have the punctures of different simplices at the
% origin). Consequently we have
% $$||\gamma||_\infty=\sup_{\sigma}\sup_{\T\in\Xi^n}| 
% n_\sigma\mathds{1}_{\Xi(\sigma)}(\T)|=\sup_\sigma|n_\sigma|$$  

%------------------------------------------------------------------%
\subsection{Cohomology as a direct limit (DL-cohomology)}
%------------------------------------------------------------------%

As mentioned in the introduction, one way to define the cohomology of
a tiling $\p$ is as \v{C}ech cohomology $\check{H}(\Omega_\p,A)$
of $\Omega_\p$. 
%We denote it by $\check{H}(\Omega_\p,A)$ where $A$ is the coefficient modul.
Since $\Omega_\p$ is an inverse limit of spaces
$(\Gamma_k)_{k\in\N}$ its \v{C}ech cohomology can be expressed as a
direct limit of groups
$$\check H(\Omega_\p,A)=\check H(\lim\limits_{\substack{
    \longleftarrow \\ l}}{\Gamma_l},A)=\lim\limits_{\substack{
    \longrightarrow \\ l}}{\check H(\Gamma_l,A)}.$$
The main advantage is that $\check H(\Gamma_k,A)$, the \v{C}ech
cohomology of  $\Gamma_k$, is simple to compute. 
As $\Gamma_k$ is a finite $\Delta$-complex its \v{C}ech
cohomology is isomorphic to its simplicial cohomology.
Furthermore, the maps $\rho_{k\,k-1}$ preserve the $\Delta$-complex
structure and so 
the maps they induce on simplicial cohomology can easily be handled. 
In its original version this was the method to calculate the tiling
cohomology for substitution tilings in \cite{AP} (although they used
cellular cohomology). 

%%%%%%%%%%
%------------------------------------------------------------------%
\subsection{Savinien and Bellissard's approach to tiling
  cohomology}\label{sec-2.6} 
%------------------------------------------------------------------%

%==================================================================%
\subsubsection{PV-cohomology.}
%==================================================================%

Recall that $\mathcal{S}_0^n$ denotes the (finite) set of characteristic maps of 
the $n$-simplices in $\Gamma_0$.
Let $\tilde\sigma\in S^n_0$ and
%and consider $\partial_i\tilde\sigma\in S^{n-1}_0$, the
%restriction of $\tilde\sigma$ to the $i$th face of $\Delta^n$. 
choose a characteristic map $\sigma$ of an $n$-simplex of $\p$ such that
$\tilde\sigma = \tilde\rho_0\circ\sigma$.
We define the vector 
$$x_{\tilde\sigma,i} =p_{ \partial_i\sigma}-
p_\sigma\in\R^d$$
which is the difference vector pointing from the puncture of the
cell defined by $\sigma$ to the puncture of its $i$th face.
%the restriction of $\sigma$ to the $i$th face of $\Delta^n$.
This is independent of the choice of $\sigma$ by Lemma~\ref{lem-simpl}. 
We denote by $\t_{\tilde\sigma,i}:\Xi(\tilde\sigma)\to 
\Xi(\partial_i\tilde\sigma)$ the restriction of
$\t^{x_{\tilde\sigma ,i}}$ to $\Xi(\tilde\sigma)$,
$\t^{x_{\tilde\sigma ,i}}(\xi) = \xi + p_\sigma-p_{ \partial_i\sigma}$.
The following definition is essentially that of \cite{BS}.
\begin{df}
Let $\tilde\sigma\in S^n_0$.
Define the linear map $\theta_{\tilde\sigma,i}:
C_{lc}(\Xi^{n-1},A)\to C_{lc}(\Xi^n,A)$ by  
$$\theta_{\tilde\sigma,i}(f) = 
\imath\circ\t_{\tilde\sigma,i}^*
(f\left|_{\Xi(\partial_i\tilde\sigma)}\right.)$$
%(\mathds{1}_{\Xi(\partial_i\tilde\sigma)}f)$$
where $ \t_{\tilde\sigma,i}^*:C_{lc}(\Xi(\partial_i\tilde\sigma),A)\to C_{lc}(\Xi(\tilde\sigma),A)$ is the pull 
back of $\t_{\tilde\sigma,i}$ and
$\imath:C_{lc}(\Xi(\tilde\sigma),A)\hookrightarrow C_{lc}(\Xi^{n},A)$
the inclusion. The strong PV-cohomology of $\p$ is the cohomology of the
complex $(C_{lc}(\Xi^*,A), d_{PV})$ where
$d_{PV}:C_{lc}(\Xi^{n},A)\longrightarrow
C_{lc}(\Xi^{n+1},A)$ 
is given by
$$ d_{PV}=
\ds\sum_{\tilde\sigma\in\mathcal{S}_0^{n+1}}\sum_{i=0}^{n+1} (-1)^i 
\theta_{\tilde\sigma,i}.
$$
We denote the strong PV-cohomology with coefficients in $A$ by
$H_{PV}^*(\Gamma_0,\CC_{lc}(\Xi,A))$. 
%$H_{PV}^n(\p,A)$.
\end{df}

%\begin{center}
%\includegraphics[scale=0.5]{dpv}
%\end{center}
%\begin{center}{\small  The PV differential  $d_{PV}$ involves a translation between punctures, with a raise in dimension.}
%\end{center}
%In \cite{BS} $H_{PV}^n(\p,\Z)$ was denoted $H_{PV}^n(\Gamma_0,\CC(\Xi,\Z)).$
$H_{PV}^*(\Gamma_0,\CC_{lc}(\Xi,\Z))$
is isomorphic to the \v{C}ech cohomology of the hull \cite{BS}. 
%Our aim is to provide  a more direct proof
%of this result by providing an explicit isomorphism.
%%%%%%%%%%%%%%
\subsubsection{Relation with pattern groupoid cohomology}
%%%%%%%%%%%%%%%
\newcommand{\GP}{\mathcal G\p}
PV-cohomology can effectively seen as the continuous cocycle cohomology (after \cite{Ren}) of the pattern groupoid $\mathcal G\p$, which is the variant of the discrete tiling groupoid defined in \cite{FHKmem}. More precisely, the PV-complex is a reduction of the complex of continuous cocycle cohomology which has the advantage of being trivial in degrees larger than the dimension of the tiling. This is similar to using a finite resolution of length $d$ instead of the standard resolution for the cohomology of the group $\Z^d$.
Let us explain this.
   
By definition,
$\mathcal G\p$ is the reduction to $\Xi^{(0)}$ of the
transformation groupoid  $\Omega_\p\rtimes \R^d$. Its elements are pairs $(\omega,v)\in \Omega_\p\times \R^d$ which satisfy $\omega,\t^v(\omega)\in \Xi^{(0)}$. 
The continuous cocycle cohomology of $\mathcal G\p$ is the cohomology of the complex of continuous functions
$$ C(\GP^{(n)},\Z)\stackrel{\delta}{\longrightarrow} C(\GP^{(n+1)},\Z) \cdots $$
where $\GP^{(0)}=\Xi^{0)}$ and $\GP^{(n)}$ is the subset of elements $(\omega,v_1,\cdots,v_n)\in \Omega_\p\times(\R^d)^n$ which satisfy $\omega\in\Xi^{(0)}$ and $\omega-\sum_{i=1}^j v_i \in\Xi^{(0)}$ for all $j$. Its topology is the subspace topology of the product topology.  
The differential is given by $\delta f (\omega,v) = f(\t^v(\omega))-f(\omega)$ 
in degree $0$, and in degree $n>0$ by
\begin{eqnarray*}
\delta f (\omega,v_1,\cdots,v_{n+1}) & = &  f(\t^{v_1}(\omega),v_2,\cdots,v_{n+1})+ (-1)^{n+1}f (\omega,v_1,\cdots,v_{n})\\
& &  +\sum_{i=1}^n (-1)^i f(\omega,v_1,\cdots,v_i+v_{i+1},\cdots,v_{n+1}) 
\end{eqnarray*}
We define the map
$\alpha : \Xi^{(n)} \to \GP^{(n)}$ by 
$$ \alpha(\omega) = (\omega,\sigma(x_1)-\sigma(x_0),\cdots,\sigma(x_n)-\sigma(x_{n-1}))$$
where $\sigma(x_i)$ is the $i$th vertex of the $n$-simplex of $\omega$ whose puncture is on $0$.  
Clearly $\alpha$ is injective. It is continuous as the
$\sigma(x_i)-\sigma(x_{i-1})$ are locally defined.
A straightforward computation shows that the pullback of $\alpha$ satisfies
$\alpha^*\circ\delta = d_{PV}\circ \alpha^*$.
It hence yields a surjective chain map. It thus induces a surjective
map from the continuous cocycle cohomology of the pattern groupoid to
PV-cohomology. As we know that the two cohomologies are isomorphic,
$\alpha^*$ has to be an isomorphism at least in the case that the
cohomology is finitely generated.
%%%%%%%%%%%%%%%%%%%%%%%%%%%%%%%%%%%%%%%%%%%%%%%%%%%
\subsubsection{Weak PV-cohomology}

We now consider the situation in which the abelian group $A$ 
carries a metric $\delta$ w.r.t.\ which it is complete. Then
$\CC(\Xi^n,A)$ is a complete module w.r.t.\ the metric $\tilde\delta(f,g):=
\sup_{\xi\in\Xi^n}\delta(f(\xi),g(\xi))$.   
\begin{lem}  The $PV$-differential extends to a continuous map
  $d_{PV}:\CC(\Xi^n,A)\stackrel{}\rightarrow \CC(\Xi^{n+1},A)$. 
\end{lem}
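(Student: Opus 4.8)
The plan is to show that each building block $\theta_{\tilde\sigma,i}$ of the differential extends continuously, and then assemble them. First I would observe that $d_{PV}$ is defined as a finite sum $\sum_{\tilde\sigma\in\mathcal{S}_0^{n+1}}\sum_{i=0}^{n+1}(-1)^i\theta_{\tilde\sigma,i}$, and that $\mathcal{S}_0^{n+1}$ is a finite set by finite local complexity. Since a finite sum of continuous maps is continuous, it suffices to establish that each $\theta_{\tilde\sigma,i}$ extends to a continuous map $\CC(\Xi^{n-1},A)\to\CC(\Xi^n,A)$.

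The key observation is that each $\theta_{\tilde\sigma,i}$ is built from three operations, each of which I expect to be continuous (in fact nonexpansive) for the sup-metric $\tilde\delta$. The first is restriction $f\mapsto f|_{\Xi(\partial_i\tilde\sigma)}$ from $\CC(\Xi^{n-1},A)$ to $\CC(\Xi(\partial_i\tilde\sigma),A)$; since taking a supremum over a smaller set cannot increase distances, this is nonexpansive. The second is the pullback $\t_{\tilde\sigma,i}^*$ along the homeomorphism $\t_{\tilde\sigma,i}:\Xi(\tilde\sigma)\to\Xi(\partial_i\tilde\sigma)$, which merely reparametrizes the domain; because $\tilde\delta$ is computed pointwise as a supremum over all points, a bijective reparametrization of the index set preserves the supremum exactly, so $\t_{\tilde\sigma,i}^*$ is an isometry. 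The third is the inclusion $\imath:\CC(\Xi(\tilde\sigma),A)\hookrightarrow\CC(\Xi^n,A)$ which extends a function on a clopen subset by zero; since $\Xi(\tilde\sigma)$ is clopen in $\Xi^n$, this extension stays in $\CC(\Xi^n,A)$ and again the sup-distance over the larger set equals the sup-distance over $\Xi(\tilde\sigma)$ (the two functions agree, at the value $0$, off $\Xi(\tilde\sigma)$), so $\imath$ is nonexpansive. Composing, each $\theta_{\tilde\sigma,i}$ is nonexpansive, hence Lipschitz, hence continuous, and extends to the completion.

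For the actual extension I would argue that the original $d_{PV}$ was defined on the dense submodule $\CC_{lc}(\Xi^*,A)\subset\CC(\Xi^*,A)$ of locally constant functions, that it is uniformly continuous there by the Lipschitz estimate above, and that a uniformly continuous map into a complete metric space extends uniquely to the completion. Since $\CC(\Xi^n,A)$ is complete with respect to $\tilde\delta$ by hypothesis (using completeness of $A$ with respect to $\delta$), the extension lands in the right space. One small point worth checking is that the extended $d_{PV}$ still squares to zero and still coincides with the algebraic differential on locally constant functions; both follow by continuity and density, since $d_{PV}^2=0$ holds on the dense subset and both sides are continuous.

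The main obstacle, such as it is, lies in verifying that the three elementary operations genuinely preserve the relevant function spaces before one even speaks of metrics: restriction and pullback are unproblematic, but the extension-by-zero map $\imath$ requires that $\Xi(\tilde\sigma)$ be clopen, which is exactly what Lemma~\ref{lem-topo} and the preceding discussion of acceptance zones guarantee. Once one is confident the operations are well defined at the level of the complete modules $\CC(\Xi^n,A)$ and not merely on locally constant functions, the continuity estimates are routine and uniform in the finite index set, so no quantitative dependence on $\tilde\sigma$ or $i$ needs to be tracked.
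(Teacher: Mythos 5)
Your proposal is correct and follows essentially the same route as the paper: it writes $d_{PV}$ as a finite sum of the maps $\theta_{\tilde\sigma,i}$ and shows each is continuous because the acceptance zones are clopen and the translation-invariant sup-metric makes the pull-back of the partial translation $\t_{\tilde\sigma,i}$ (non)expansive, the finite sum then being continuous. One small caveat that does not affect your argument: $\t_{\tilde\sigma,i}$ is in general only a homeomorphism onto a clopen subset of $\Xi(\partial_i\tilde\sigma)$, not onto all of it, so $\t_{\tilde\sigma,i}^*$ is a partial isometry (nonexpansive) rather than the exact isometry you claim --- which is all you actually use.
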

\pn\textbf{\emph{Proof.}} 
First observe that $\t_{\tilde\sigma,i}$ is a partial translation.
%which is a partial homeomorphism of $\Xi^n$ as its 
%with clopen domain and range. 
$\theta_{\tilde\sigma,i}$ is essentially its pull-back and since
domain and range of $\t_{\tilde\sigma,i}$ are clopen
$\theta_{\tilde\sigma,i}(f)$ is continuous for continuous $f$. 
Since $\tilde\delta$ is translation invariant the pull back of a
partial translation is a partial isometry. Hence
$\theta_{\tilde\sigma,i}$ is
continuous. Since $d_{PV}$ is a finite sum of
$\theta_{\tilde\sigma,i}$ the result follows.  \qed\bigskip

\begin{df}
The weak $PV$-cohomology is the cohomology of the so-called weak
$PV$-complex $(\CC(\Xi^*,A),d_{PV})$. 
It is denoted by $H^*_{PV}(\Gamma_0,\CC(\Xi,A))$. 
\end{df}
While for discrete groups, like $\Z$, the strong and the weak PV-complex coincide
they differ vastly in the case that $A$ is a continuous group, like
$\R$. Although the inclusion 
$\CC_{lc}(\Xi^n,A)\subset \CC(\Xi^n,A)$ is a chain map,  
there is no evident relation between the two cohomologies in the
continuous case.

%------------------------------------------------------------------%
\subsection{PE de Rham cohomology}
%------------------------------------------------------------------%

In this section, we recall the definition of the pattern equivariant de Rham
cohomology of a tiling.

\begin{df} Let $\p$ be a tiling of finite local complexity.
A function $f$ on $\R^d$ is pattern equivariant with range $R>0$ if
$$\forall x,y\in\R^d,\mbox{ }
B_R[\t^x(\p)]=B_R[\t^y(\p)] \Rightarrow f(x)=f(y). $$ 
It is called strongly pattern equivariant (s-PE) if it is pattern
equivariant with some finite range $R$.\footnote{This definition does not 
only apply to tilings but to any kind of pattern of $\R^d$, but it
captures what we want only in the case of finite local complexity.} 
\end{df}
We denote by $\A_{b}^n(\R^d)$ the space of bounded (smooth) $n$-forms
over $\R^d$. Such forms can be seen as smooth functions from $\R^d$ 
%$C_{s-\p}^\infty(\R^d,\Lambda^n{\R^d}^\ast)$ the space
into $\Lambda^n{\R^d}^\ast$, the exterior algebra of the dual
of $\R^d$ (equipped with some norm).
We may therefore consider $\A_{s-\p}^n(\R^d)$ the space of s-PE
$n$-forms over $\R^d$. 
Together with the standard exterior derivative $(\A_{s-\p}^*(\R^d),d)$
is a subcomplex of the usual de Rham complex of differential forms on $\R^d$.  

The cohomology of the differential complex $(\A_{s-\p}^*(\R^d),d)$ is
called the strongly pattern equivariant cohomology and denoted by
$H_{s-\p}^*(\R^d)$. As ordinary de Rham cohomology it is  
a cohomology with real coefficients. 
$H_{s-\p}^*(\R^d)$ is isomorphic to % $\check H(\Omega_\p, \R)$, 
the \v{C}ech cohomology
of the tiling space with real coefficients \cite{KP}.

Strongly pattern equivariant functions are in some sense algebraic,
namely the size $R$ of the patch around a point $x$ which has to be
inspected to obtain the value of the function is fixed and finite.
This naturally calls for looking at functions which are obtained as
limits of strongly pattern equivariant functions. These are called
weakly pattern-equivariant.
More precisely, the space of smooth weakly pattern equivariant
$n$-forms  $\A_{w-\p}^*(\R^d)$ 
is the closure of $\A_{s-\p}^*(\R^d)$
in $\A_{b}^*(\R^d)$ with respect to the
Fr\'{e}chet topology given by the family of semi norms $s_k$,
$$s_k(f)=\sup_{|\kappa|\leq k}||D^\kappa f||_\infty$$ \pn
where $\kappa=(\kappa_1,\ldots,\kappa_n)\in\N^n$,
$|\kappa|=\sum_{i=1}^n \kappa_i$, $D^\kappa f=\frac{\partial
^{\kappa_1}}{\partial x_1^{\kappa_1}}\cdots \frac{\partial
^{\kappa_n}}{\partial x_n^{\kappa_n}}f$ and $||.||_\infty$ is the norm
of the uniform convergence. 
The weakly pattern equivariant cohomology $H_{w-\p}^*(\R^d,\R)$ 
is the cohomology of the differential complex
$(\A_{w-\p}^*(\R^d),d)$. 
%$\A_{w-\p}^n(\R^d) = C_{w-\p}^\infty(\R^d,\Lambda^n{\R^d}^\ast)$.
%The weakly pattern equivariant cohomology,
$H_{w-\p}^\ast(\R^d,\R)$ is isomorphic to the tangential cohomology
of the tiling space \cite{KP}. We shall provide below an example which
confirms our expectation that the weak pattern equivariant cohomology
of aperiodic tilings is always infinitely generated. 

In the last section, we will be interested in deformations of tilings.
These deformations are parameterized\footnote{Strictly speaking the
  parameterization is in terms of this group with coefficients in
  $\R^d$.}   
(in \cite{K2}) by a group called
the pattern equivariant mixed group. It is defined by the quotient
$$Z_{s-\p}^1(\R^d)/B_{w-\p}^1(\R^d)\cap Z_{s-\p}^1(\R^d)$$
of closed strongly PE $1$ forms modulo those $1$ forms which are weakly exact,
$Z_{s-\p}^1(\R^d) = \A_{s-\p}^1(\R^d)\cap \ker d$ and $B_{w-\p}^1(\R^d) = d (\A_{w-\p}^0(\R^d))$.

\subsection{PE simplicial cohomology}

Since $\p$ is a simplicial tiling it defines a simplicial complex in $\R^d$.
The concept of pattern equivariance can equally well be defined for
cochains (with values in any abelian group $A$) over this complex. In
fact, a cochain is s-PE if there is some $k>0$ such that 
its value on $\langle\sigma\rangle$ depends only on the $k$-neighborhood of the
simplex $\sigma$ of $\p$. We denote the degree $n$ s-PE cochains with coefficients
in $A$ by $C_{s-\p}^n(\p,A)$.
Clearly
the coboundary operator maps s-PE cochains to s-PE cochains and so we may
consider what is called the s-PE simplicial cohomology, which is the
simplicial cohomology of the complex $(C_{s-\p}^*(\p,A),d_S)$.
We denote it by $H_{s-\p}^*(\p,A)$.

It has been shown in \cite{S2} that s-PE simplicial
cohomology is isomorphic to the \v{C}ech cohomology of the tiling space. 

Just as one can define weak PE de Rham cohomology by using a
completion w.r.t.\ a natural family of semi-norms, one can define weak
PE simplicial cohomology by means of completion in the (metric)
topology of uniform convergence. 
More precisely we define the w-PE cochains $C_{w-\p}(\p,A)$ (with coefficients
in $A$) as the closure of $C_{s-\p}(\p,A)$
in the set of bounded cochains $C_b(\p,A)$  
w.r.t.\ to the metric $\tilde\delta(c,c')=\sup_\sigma \delta(c(\langle\sigma\rangle),c'(\langle\sigma\rangle))$. Clearly the simplicial coboundary-boundary
operator is continuous in the topology and defines the w-PE
complex $(C_{w-\p}^*(\p,A),d_S)$ whose cohomology we call the w-PE
simplicial cohomology and denote by $H_{w-\p}^*(\p,A)$.

%-------------------------------------------------------------
\section{PE simplicial cohomology and PV cohomology}
%-------------------------------------------------------------
We start with comparing the strong versions of PE simplicial cohomology
and PV cohomology. This material is essentially known though not
written in this form. At the end of the section we compare the weak
versions. This is new. 
%------------------------------------------------------------------%
\subsection{PV cohomology as DL cohomology}
%------------------------------------------------------------------%
We need to discuss in some detail the construction of the
isomorphism between DL cohomology and PV cohomology. 
It essentially establishes that PV-cochains yield a model for the
direct limit of the simplicial cochains of the APG complexes. 

We denote in this section the pull back of 
$\rho_{k+1\,  k}:\Gamma_{k+1}\to\Gamma_{k} $ by $\pi_{k\,k+1}$.
Recall that the direct limit of the directed system 
$C^n(\Gamma_k,A)\stackrel{\pi_{k\,k+1}}{\longrightarrow}
C^n(\Gamma_{k+1},A) $ is 
a module (denoted  
$\lim\limits_{\longrightarrow}{C^n(\Gamma_k,A)}$) together with module maps $\pi_k$ such that
the diagram
$$ \xymatrix{ C^n(\Gamma_k,A) \ar[rr]^{\pi_{k\,k+1}= \rho^*_{k+1\,k}} 
\ar[rd]^{\pi_k} &&
  C^n(\Gamma_{k+1},A) \ar[ld]_{\pi_{k+1}} \\ & 
\lim\limits_{\substack{ \longrightarrow{} \\ l}}{C^n(\Gamma_l,A)} }
$$
commutes. It has moreover the universal property that,
whenever another module $M$ with module maps $\pi'_k:C^n(\Gamma_k,A)\to M$  
yields a commuting diagram 
$$ \xymatrix{
            C^n(\Gamma_k,A) \ar@/_/[rd]_{\pi_k'} \ar[rr]^{\pi_{k\,k+1}}
            && C^n(\Gamma_{k+1},A) \ar@/^/[ld]^{\pi_{k+1}'}  \\
             & M } $$
then the $\pi'_i$ factor uniquely through the direct limit \cite{L}. 
The latter means that there exists a
unique homomorphism 
$h:\lim\limits_{\longrightarrow}{C^n(\Gamma_k,A)} \longrightarrow M$ satisfying
% such that the diagram 
% $$ \xymatrix{
%             C^n(\Gamma_i,A) \ar@/_/[rdd]_{\f_i} \ar[rr]^{\pi_{kj}} \ar[rd]^{\pi_i}
%             && C^n(\Gamma_j,A) \ar@/^/[ldd]^{\f_j} \ar[ld]_{\pi_j} \\
%              & \lim\limits_{\substack{ \longrightarrow{} \\ l}}{C^n(\Gamma_l,A)}  \ar[d]^f \\ & G} $$
% commutes. In fact, this homomorphism is uniquely determined by
$ h\circ \pi_k = \pi'_k$ for all $k\in \N$.

Define the module maps $\f_k:C^n(\Gamma_k,A)\longrightarrow
\CC_{lc}(\Xi_{\Delta}^n,A)$ by
$$\f_k(a_\sigma)=a_{\Xi(\sigma)}.$$
%where $\mathds{1}_\sigma$ denotes the chain assigning $1$ to $\sigma$
%and $0$ to any other simplex. 
These maps 
%are well defined, since the set $C_n(\Gamma_k)$
%is a free abelian group with basis $\mathcal{S}_k^n$.
induce a new commuting diagram
$$ \xymatrix{
            C^n(\Gamma_k,A) \ar@/_/[rd]_{\f_k} \ar[rr]^{\pi_{kj}}
            && C^n(\Gamma_j,A) \ar@/^/[ld]^{\f_j}  \\
             & \CC_{lc}(\Xi^n,A)} $$
By the universal property of the direct limit there exists a
unique homomorphism $$\f:\lim\limits_{\substack{ \longrightarrow{} \\
l}}{C^n(\Gamma_l,A)} \longrightarrow \CC_{lc}(\Xi^n,A)$$ such that
$$ \f\circ \pi_k = \f_k\quad k\in \N.$$

\begin{lem}\label{inj} The maps $\f_k$ are one-to-one.\end{lem}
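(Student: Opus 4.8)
The plan is to show that each $\f_k$ sends a nonzero cochain to a nonzero locally constant function on $\Xi^n$, which suffices since the $\f_k$ are module homomorphisms. Recall that $C^n(\Gamma_k,A)$ is the free $A$-module (via the $\Hom$ identification) generated by the simplices $\sigma \in \mathcal{S}_k^n$, so a general element is a finite sum $c = \sum_\sigma a_\sigma$ with the $a_\sigma \in A$ indexed by distinct simplices $\sigma$ of $\Gamma_k$. By definition $\f_k(c) = \sum_\sigma a_{\Xi(\sigma)}$, the corresponding finite combination of indicator-type functions on the acceptance zones. The key point will be that distinct $n$-simplices $\sigma, \sigma'$ of $\Gamma_k$ have \emph{disjoint} acceptance zones $\Xi(\sigma)$, $\Xi(\sigma')$ in $\Xi^n$.

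\textbf{First I would} establish this disjointness. Two distinct $n$-simplices of $\Gamma_k$ correspond to two inequivalent $k$-collared $n$-faces, i.e.\ two $n$-faces carrying genuinely different $k$-neighborhood labels. A tiling $\xi \in \Xi^n$ lies in $\Xi(\sigma) = \rho_k^{-1}(p_\sigma)$ precisely when the $n$-face of $\xi$ whose puncture sits at the origin, together with its $k$-neighborhood, is the one represented by $\sigma$. Since a tiling has exactly one $n$-face puncture at the origin (this is what the condition $0 \in \xi^{n,punc}$ encodes for points of $\Xi^n$), that face determines a \emph{single} $k$-collared prototile, hence a single simplex of $\Gamma_k$. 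Therefore $\xi$ cannot simultaneously lie in $\Xi(\sigma)$ and $\Xi(\sigma')$ for $\sigma \neq \sigma'$, giving $\Xi(\sigma) \cap \Xi(\sigma') = \emptyset$.

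\textbf{Then I would} conclude injectivity. Suppose $\f_k(c) = 0$ with $c = \sum_\sigma a_\sigma$. Because the acceptance zones are pairwise disjoint, the value of $\f_k(c)$ at any point $\xi \in \Xi(\sigma)$ is exactly $a_\sigma$ (all other terms vanish there). Hence $\f_k(c) = 0$ forces $a_\sigma = 0$ on each nonempty $\Xi(\sigma)$; and since $\sigma$ is a genuine simplex of $\Gamma_k$ coming from a face that actually occurs in $\p$, its acceptance zone $\Xi(\sigma)$ is nonempty (indeed clopen, by Lemma~\ref{lem-topo}). Thus every coefficient $a_\sigma$ vanishes, so $c = 0$, proving $\f_k$ is one-to-one.

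\textbf{The main obstacle} I anticipate is the disjointness step: one must argue carefully that a point of $\Xi^n$ determines a unique $n$-face at the origin and hence a unique simplex of $\Gamma_k$, ruling out the degenerate possibility that the non-injectivity of the characteristic maps $\tilde\sigma$ on $\Gamma_k$ (noted earlier in the excerpt) causes two distinct simplices to share an acceptance zone. This is resolved by working with punctures rather than arbitrary points — the puncture $p_\sigma$ pins down the face unambiguously — together with the observation that the $k$-collaring distinguishes the labels, so the surjection $\rho_k$ separates $\Xi(\sigma)$ from $\Xi(\sigma')$. The remainder is the routine free-module argument above.
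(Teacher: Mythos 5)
Your proof is correct and follows essentially the same route as the paper: the paper's proof also evaluates $\f_k(\gamma)$ at points $\xi\in\Xi(\sigma_0)$, observing that the unique $k$-collared $n$-face of $\xi$ punctured at the origin forces $a_{\Xi(\sigma)}(\xi)=0$ unless $\sigma=\sigma_0$, so that $\f_k(\gamma)(\xi)=\gamma(\langle\sigma_0\rangle)$, and then lets $\xi$ vary. You merely spell out explicitly the two facts the paper leaves implicit --- pairwise disjointness of the acceptance zones and their nonemptiness (the latter following from surjectivity of $\rho_k$) --- which is a harmless elaboration, not a different argument.
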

\pn \textbf{\emph{Proof.}} 
Let $\xi$ in $\Xi(\sigma_0)$ for some $\sigma_0\in
S_k^n$. Hence $\xi$ contains a patch at the origin which is
translationally congruent to 
the face with its $k$-neighborhood  
encoded by $\sigma_0$. Thus
$$a_{\Xi(\sigma)}(\xi)=
\left\{
\begin{array}{rl}
&a  \mbox{ if } \sigma=\sigma_0, \\
&0  \mbox{ if } \sigma\neq\sigma_0.
\end{array}
\right.
$$
Let $\gamma\in C^n(\Gamma_k, A)$. Then the above shows that  
$\f_k(\gamma)(\xi)=\gamma(\langle\sigma_0\rangle)$.  
Letting $\xi$ vary we see that $\f_k(\gamma)=0$ implies $\gamma=0$.
\qed\bigskip

\begin{lem}\label{surj} For all $\gamma\in \CC(\Xi^n,A)$ there is
  $k\in \N$ such that $\gamma\in \im (\f_k)$.\end{lem}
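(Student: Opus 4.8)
The plan is to use that $\Xi^n$ is compact and totally disconnected together with Lemma~\ref{lem-topo}, which says that the acceptance zones form a base of its topology. (Note that the statement should read $\gamma\in\CC_{lc}(\Xi^n,A)$, since $\im\f_k$ consists of locally constant functions, so the conclusion can only hold for locally constant $\gamma$.) Since $\gamma$ is locally constant, every $\xi\in\Xi^n$ has a neighbourhood on which $\gamma$ is constant, and by the base property this neighbourhood may be taken to be an acceptance zone: there is a level $k_\xi$ and an $n$-simplex $\sigma_\xi\in\mathcal S_{k_\xi}^n$ with $\xi\in\Xi(\sigma_\xi)$ and $\gamma$ constant on $\Xi(\sigma_\xi)$. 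These acceptance zones cover the compact space $\Xi^n$, so finitely many of them, say $\Xi(\sigma_1),\dots,\Xi(\sigma_m)$ at levels $k_1,\dots,k_m$, already suffice.

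The key step is to consolidate these different collaring levels into one. Put $k=\max_i k_i$. For each fixed level the acceptance zones $\{\Xi(\tau)\}_{\tau\in\mathcal S_k^n}$ form a clopen partition of $\Xi^n$, because every $\xi\in\Xi^n$ has a single $n$-face with puncture at the origin and hence determines a unique $\tau\in\mathcal S_k^n$ with $\rho_k(\xi)=p_\tau$. Moreover these partitions refine as $k$ grows: writing $\rho_{k\,k'}$ for the composite $\Gamma_k\to\Gamma_{k'}$ when $k\ge k'$, the inverse-system relation $\rho_{k'}=\rho_{k\,k'}\circ\rho_k$ gives, for $\xi\in\Xi(\tau)$, that $\rho_{k'}(\xi)=\rho_{k\,k'}(p_\tau)=p_{\tau'}$ with $\tau'=\rho_{k\,k'}(\tau)$, whence $\Xi(\tau)\subseteq\Xi(\tau')$. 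Therefore, given $\tau\in\mathcal S_k^n$, choose $\xi\in\Xi(\tau)$ and let $\Xi(\sigma_i)$ be a covering set containing $\xi$; since $\Xi(\tau)\subseteq\Xi(\rho_{k\,k_i}(\tau))$ and the level-$k_i$ zones are pairwise disjoint, the level-$k_i$ zone through $\xi$ is unique, so $\Xi(\rho_{k\,k_i}(\tau))=\Xi(\sigma_i)$ and hence $\Xi(\tau)\subseteq\Xi(\sigma_i)$. As $\gamma$ is constant on $\Xi(\sigma_i)$, it is constant on $\Xi(\tau)$; call this value $a_\tau\in A$.

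Finally I would assemble the cochain. By finite local complexity $\mathcal S_k^n$ is finite, so defining $c\in C^n(\Gamma_k,A)$ by $c(\langle\tau\rangle)=a_\tau$ for $\tau\in\mathcal S_k^n$ gives $\f_k(c)=\sum_{\tau\in\mathcal S_k^n}(a_\tau)_{\Xi(\tau)}=\gamma$, which exhibits $\gamma\in\im(\f_k)$. The main obstacle is the second paragraph: passing from finitely many acceptance zones at possibly different collaring levels to a single common level $k$, which rests on the nesting $\Xi(\tau)\subseteq\Xi(\rho_{k\,k'}(\tau))$ inherited from the \APG\ system. Once this refinement is in place the remaining steps are pure bookkeeping, and together with Lemma~\ref{inj} this identifies $\f$ as an isomorphism of $\lim_{\longrightarrow} C^n(\Gamma_l,A)$ onto $\CC_{lc}(\Xi^n,A)$.
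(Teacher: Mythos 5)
Your proof is correct and follows essentially the same route as the paper's: Lemma~\ref{lem-topo} together with compactness of $\Xi^n$ to write $\gamma$ as a finite combination of functions $a_{\Xi(\sigma)}$, and the refinement $\Xi(\tau)\subseteq\Xi(\rho_{k\,k'}(\tau))$ of the \APG\ system (equivalently the inclusion $\im(\f_k)\subset\im(\f_j)$ for $j\geq k$, which the paper gets from $\f_j\circ\pi_{kj}=\f_k$) to consolidate everything at a single level; you merely spell out the details that the paper delegates to its terse remark following Lemma~\ref{lem-topo}. Your side remark that the statement should read $\CC_{lc}(\Xi^n,A)$ also matches the paper's intent, since its proof and the later use in Theorem~\ref{thm-BS} tacitly treat $\gamma$ as locally constant.
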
 
\pn\textbf{\emph{Proof.}} By
Lemma~\ref{lem-topo} every element of  $\CC(\Xi^n,A)$
is a finite sum of $a_{\Xi(\sigma)}$, $a\in
A$ and $\sigma \in \Gamma_{k}$ for some $k$.
Since $a_{\Xi(\sigma)}\in \im(\f_{k})$ and $\im (\f_k) \subset
\im (\f_{k+1})$ the statement follows. 
\qed\bigskip

\begin{lem}\label{lem-com} The $\f_k$ intertwine the
  differentials, i.e.\ $\f_k\circ d_S = d_{PV}\circ \f_k$.
\end{lem}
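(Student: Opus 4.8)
The plan is to verify the identity on the generators $a_\sigma$, $\sigma\in\mathcal{S}_k^n$, since both $\f_k$ and the two differentials are $\Z$-linear and the $a_\sigma$ generate $C^n(\Gamma_k,A)$. On the left, the simplicial coboundary reads $d_S a_\sigma = \sum_{\tau\in\mathcal{S}_k^{n+1}} m_\tau\, a_\tau$ with $m_\tau=\sum_{i:\,\partial_i\tau=\sigma}(-1)^i$, so that $\f_k(d_S a_\sigma)=\sum_{\tau} m_\tau\, a_{\Xi(\tau)}$. On the right I must compute $d_{PV}(a_{\Xi(\sigma)})$. Rather than manipulate the two sums formally, I would evaluate both cochains at an arbitrary point $\xi\in\Xi^{n+1}$ and compare the two resulting elements of $A$.

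For this I would use that the acceptance zones $\{\Xi(\tau)\}_{\tau\in\mathcal{S}_k^{n+1}}$ partition $\Xi^{n+1}$: a point $\xi\in\Xi^{n+1}$ has a unique $(n+1)$-face punctured at the origin, and collaring it to level $k$ singles out a unique $\tau_0\in\mathcal{S}_k^{n+1}$ with $\xi\in\Xi(\tau_0)$. Write $\tilde\sigma_0\in\mathcal{S}_0^{n+1}$ for the image of $\tau_0$ under forgetting the collar down to level $0$, so that $\xi\in\Xi(\tilde\sigma_0)$ and $\tilde\sigma_0$ is the unique $\Gamma_0$-simplex with this property. Then at once $\f_k(d_S a_\sigma)(\xi)=a\sum_{i:\,\partial_i\tau_0=\sigma}(-1)^i$, since $a_{\Xi(\tau)}(\xi)=a$ precisely for $\tau=\tau_0$.

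For the right-hand side, because each $\theta_{\tilde\sigma,i}(a_{\Xi(\sigma)})$ is supported on $\Xi(\tilde\sigma)$ through the inclusion $\imath$, and the zones $\Xi(\tilde\sigma)$, $\tilde\sigma\in\mathcal{S}_0^{n+1}$, are pairwise disjoint, only the term $\tilde\sigma=\tilde\sigma_0$ survives at $\xi$; thus $d_{PV}(a_{\Xi(\sigma)})(\xi)=\sum_{i=0}^{n+1}(-1)^i\, a_{\Xi(\sigma)}(\t_{\tilde\sigma_0,i}(\xi))$. The crux is to decide, for each $i$, when $\t_{\tilde\sigma_0,i}(\xi)\in\Xi(\sigma)$. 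Here I would invoke the compatibility of punctures: the puncture of the $(n+1)$-face of $\xi$ lies at the origin, and since punctures of translationally congruent faces sit at the same relative position, the difference vector to the puncture of its $i$th face equals $x_{\tilde\sigma_0,i}$. Hence $\t_{\tilde\sigma_0,i}=\t^{x_{\tilde\sigma_0,i}}$ carries that $i$th-face puncture to the origin, so $\t_{\tilde\sigma_0,i}(\xi)\in\Xi(\partial_i\tau_0)$, and therefore $\t_{\tilde\sigma_0,i}(\xi)\in\Xi(\sigma)$ if and only if $\partial_i\tau_0=\sigma$ in $\Gamma_k$. This gives $d_{PV}(a_{\Xi(\sigma)})(\xi)=a\sum_{i:\,\partial_i\tau_0=\sigma}(-1)^i$, matching the left-hand value; as $\xi$ was arbitrary the cochains coincide, and linearity finishes the argument.

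The step I expect to be the main obstacle is the bookkeeping in this last identification, namely aligning the PV-differential—which is indexed by the $(n+1)$-simplices of $\Gamma_0$ and their faces—with the simplicial coboundary on $\Gamma_k$. Everything hinges on two geometric facts from the preliminaries: the nesting and partition behaviour of acceptance zones under the collaring maps, and the pattern-equivariant choice of punctures that makes the partial translation $\t_{\tilde\sigma,i}$ send the $i$th-face puncture of \emph{every} representative exactly to the origin. Once these are in place, the pointwise comparison presents no further difficulty.
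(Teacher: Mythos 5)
Your proof is correct and follows essentially the same route as the paper's: both verify the identity on the generators $a_\sigma$, evaluate the two cochains at an arbitrary $\xi\in\Xi^{n+1}$, and reduce everything, via the unique $k$-collared simplex $\sigma^k_\xi$ at the origin and the puncture compatibility giving $\t_{\tilde\sigma_0,i}(\xi)\in\Xi(\partial_i\tau_0)$, to the condition $\partial_i\sigma^k_\xi=\sigma$. If anything, your bookkeeping is slightly more careful than the paper's displayed computation, which omits the signs $(-1)^i$ in the expansion of $d_{PV}\circ\f_k(a_\sigma)$ and writes $1$ where $a$ is meant.
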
 
\pn\textbf{\emph{Proof.}} Recall the definition
 of the simplicial coboundary operator $d_S$
 on $\Gamma_k$: Given $\sigma\in S^n_k$ we have
$$d_{S}(a_\sigma)=\sum_{\tau\in S^{n+1}_k}\sum_{i=0}^{n+1}(-1)^i\delta_{\sigma\,\partial_i\tau} 
a_\tau. $$
Hence
$$\f_k\circ d_{S}(a_\sigma)(\xi)=\sum_{i=0}^{n+1}(-1)^i\delta_{\sigma\,\partial_i\sigma_\xi^k}$$ 
where $\sigma^k_\xi$ is the $k$-collared simplex of $\xi$ on $0$.
%$$\f_k\circ d_{S}(a_\sigma)=\sum_{\tau\in S^{n+1}_k}\sum_{i=0}^{n+1}(-1)^i\delta_{\partial_i\tau\,\sigma} \mathds{1}_{\Xi(\tau)}. $$
On the other hand
\begin{eqnarray*}
d_{PV}\circ \f_k(a_{\sigma}) & = & \sum_{\tau\in\mathcal{S}_0^{n+1}}\sum_{i=0}^{n+1} \theta_{\tau,i}
(a_{\Xi(\sigma)}) \\
& = & \sum_{\tau\in\mathcal{S}_0^{n+1}}\sum_{i=0}^{n+1} 
\imath\circ\t_{\tau,i}^*(a_{\Xi(\sigma)}
\left|_{\Xi(\partial_i\tau)}\right.) .\\
\end{eqnarray*}
Now 
\begin{eqnarray*}
\imath\circ\t_{\tau,i}^*(a_{\Xi(\sigma)}\left|_{\Xi(\partial_i\tau)}\right.)(\xi) &=& \left\{\begin{array}{ll}
a_{\Xi(\sigma)}(\t_{\tau,i}(\xi)) & \mbox{if } \xi\in\Xi(\tau) \\
0 & \mbox{else} \end{array}\right.\\ 
&=& \left\{\begin{array}{cl}
1 & \mbox{if } \sigma_{\xi-x_{\tau,i}}^k = \sigma\mbox{ and } 
\sigma_\xi^0 = \tau \\
%1 & \mbox{if } \sigma_{\xi-x_{\tau,i}}^k = \sigma\mbox{ and } 
%\rho_{k0}(\sigma_\xi) = \partial_i\tau \\
0 & \mbox{else} \end{array}\right.
\end{eqnarray*}
and since $\sigma_{\xi-x_{\tau,i}}^k = \sigma$ and  
$\sigma_\xi^0 = \tau$ can happen iff
$\partial_i\sigma_\xi^k=\sigma$ the claim follows. \qed\bigskip

Recall that the cohomology of 
$\lim\limits_{\longrightarrow} C(\Gamma_k,A)$ is the direct limit 
$\lim\limits_{\longrightarrow} H(\Gamma_k,A)$ together with module maps 
$H(\pi_k):H(\Gamma_k,A) \to \lim\limits_{\longrightarrow}
H(\Gamma_k,A)$ such that $H(\pi_{k+1}) = H(\pi_k) \circ H(\pi_{k\,k+1})$.
\begin{thm}[\cite{BS}] \label{thm-BS}
$\f$ is a module isomorphism. It induces an isomorphism
in cohomology.
\end{thm}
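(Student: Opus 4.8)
The plan is to deduce the theorem from the three preceding lemmas (Lemmas~\ref{inj}, \ref{surj}, \ref{lem-com}) together with the standard formal properties of the direct limit. I would organize the argument in two stages: first showing that $\f$ is a degreewise module isomorphism compatible with the differentials, and then reading off the cohomology statement.

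First I would check that $\f$ is a chain map. Lemma~\ref{lem-com} gives $\f_k\circ d_S = d_{PV}\circ \f_k$ at each finite level $k$. Since the differential on $\lim\limits_{\longrightarrow} C^n(\Gamma_l,A)$ is precisely the one induced by the $d_S$ (these being compatible with the connecting maps $\pi_{k\,k+1}$), and since $\f$ is the unique map characterized by $\f\circ\pi_k=\f_k$, the universal property forces $\f\circ d = d_{PV}\circ \f$: both sides agree after precomposition with every $\pi_k$, hence they agree. Thus $\f$ is a morphism of cochain complexes from $(\lim\limits_{\longrightarrow} C^*(\Gamma_l,A),d)$ to the strong PV-complex $(\CC_{lc}(\Xi^*,A),d_{PV})$.

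Next I would prove that $\f$ is bijective in each degree $n$. Surjectivity is immediate from Lemma~\ref{surj}: every $\gamma\in\CC_{lc}(\Xi^n,A)$ lies in $\im(\f_k)$ for some $k$, and since $\f_k=\f\circ\pi_k$ we have $\im(\f_k)\subseteq\im(\f)$, so $\f$ is onto. For injectivity, one uses that every element of the direct limit is realized at a finite stage, i.e.\ is of the form $\pi_k(\gamma)$ for some $k$ and some $\gamma\in C^n(\Gamma_k,A)$. If $\f(\pi_k(\gamma))=0$, then $\f_k(\gamma)=\f(\pi_k(\gamma))=0$, and Lemma~\ref{inj} gives $\gamma=0$, whence $\pi_k(\gamma)=\pi_k(0)=0$ in the limit. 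So $\ker\f=0$. Having a degreewise isomorphism commuting with the differentials, $\f$ is an isomorphism of cochain complexes, and any isomorphism of cochain complexes induces isomorphisms on cohomology in every degree; combined with the recalled identification $H^*(\lim\limits_{\longrightarrow} C(\Gamma_l,A))\cong\lim\limits_{\longrightarrow} H^*(\Gamma_l,A)$ (exactness of the filtered colimit over $\N$), this yields the asserted isomorphism between DL-cohomology and strong PV-cohomology.

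The main (and essentially only) subtlety is the injectivity step. The lemmas are statements about the finite complexes $C^n(\Gamma_k,A)$, so one must invoke the fact that every element of a direct limit indexed by $\N$ admits a representative at a finite stage in order to transport the level-wise injectivity of the $\f_k$ to the limit map $\f$. Everything else is a formal consequence of the universal property of $\lim\limits_{\longrightarrow}$ and of the exactness of filtered colimits, so once the three lemmas are in hand there is no genuine analytic or geometric obstacle remaining.
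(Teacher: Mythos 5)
Your proof is correct, and it rests on exactly the same three ingredients as the paper's (Lemmas~\ref{inj}, \ref{surj} and \ref{lem-com} together with the universal property of the direct limit), but it organizes them differently. You first upgrade $\f$ to an isomorphism of cochain complexes --- injectivity from finite-stage representatives plus Lemma~\ref{inj}, surjectivity from Lemma~\ref{surj}, and the chain-map identity $\f\circ d = d_{PV}\circ\f$ by checking both sides after precomposition with every $\pi_k$ --- and then the cohomology statement is purely formal: a chain isomorphism induces isomorphisms in cohomology, and the identification $H^*(\lim\limits_{\longrightarrow} C(\Gamma_l,A))\cong \lim\limits_{\longrightarrow} H^*(\Gamma_l,A)$ (recalled in the paper just before the theorem) converts this into the DL-versus-PV statement. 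The paper instead works at the cohomology level from the start: it constructs $H(\f)$ by applying the universal property to the induced maps $H(\f_k)$, checks $H(\f)\circ H(\pi_k)([\omega])=[\f(\pi_k(\omega))]$, and then proves bijectivity of $H(\f)$ by an explicit chase --- for injectivity it represents a killed class at a finite level $k$, writes the PV-primitive $\gamma$ with $\f_k(\omega)=d_{PV}\gamma$ as $\gamma=\f_l(\alpha)$ via Lemma~\ref{surj}, moves the differential across via Lemma~\ref{lem-com}, and cancels $\f_l$ via Lemma~\ref{inj} to conclude $\omega=d_S\alpha$. Your route is shorter and makes the first assertion of the theorem (that $\f$ is a module isomorphism) carry all the weight; the paper's chase is essentially what your argument becomes once the formal steps are unwound, and it has the minor merit of exhibiting concretely the finite stage $l$ at which a coboundary witness lives. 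One small point you leave implicit: the isomorphism you obtain on $\lim\limits_{\longrightarrow} H^*(\Gamma_l,A)$ should be identified with the map $H(\f)$ characterized by $H(\f)\circ H(\pi_k)=H(\f_k)$; this follows at once from the uniqueness clause of the universal property, since your composite satisfies the same identities.
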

\pn\textbf{\emph{Proof.}}
As the $\f_k$ intertwine the differentials they induce homomorphisms in
cohomology $H(\f_k) : H(\Gamma_k,A) \to H_{PV}(\Gamma_0,\CC_{lc}(\Xi,A))$
such that $H(\f_{k+1}) = H(\f_k) \circ H(\pi_{k\,k+1})$.
By the universal property we get a unique homomorphism which we denote
$H(\f)$ making
$$ \xymatrix{
            H(\Gamma_k,A) \ar@/_/[rdd]_{H(\f_k)} \ar[rr]^{H(\pi_{kj})}
            \ar[rd]^{H(\pi_k)} 
            && H(\Gamma_j,A) \ar@/^/[ldd]^{H(\f_j)} \ar[ld]_{H(\pi_j)} \\
             & \lim\limits_{\substack{ \longrightarrow \\
                 l}}{H(\Gamma_l,A)}  \ar[d]^{H(\f)} \\ &
             H_{PV}(\Gamma_0,\CC_{lc}(\Xi,A))} $$ 
commute.  
Hence $ H(\f)\circ H(\pi_k) ([\omega]) = H(\f_k)([\omega]) =
[\f_k(\omega)] = [\f\circ \pi_k(\omega)]$ where we have denote by
$[\cdot]$ cohomology classes.

 {\em Injectivity:}
Let $x \in \lim\limits_{\longrightarrow}{H(\Gamma_l,A)}$
such that $H(\f)(x) = 0$. There exists $k\in\N$ and a cochain in $\Gamma_k$
such that $x = H(\pi_k)[\omega]$. Thus $0 = [\f_k(\omega)]$.  Let
$\gamma\in \CC_{lc}(\Xi,A)$ satisfy
$\f_k(\omega)=d_{PV}\gamma$. 
By Lemma~\ref{surj}, there is $l\in \N$
and $\alpha\in C(\Gamma_l,A)$, such that $\gamma = \f_l(\alpha)$. By
Lemma~\ref{lem-com} 
$d_{PV}\f_l(\alpha)=\f_l(d_{S}\alpha)$. We may
suppose that $l\geq k$. Then the above implies that $\f_l(\omega) =
\f_l(d_S\alpha)$ and hence, by Lemma~\ref{inj} $\omega = d_S\alpha$.

{\em Surjectivity:} Let $[\gamma]\in H(\Gamma_0,\CC_{lc}(\Xi,A))$. Then
$\gamma = \f(\tilde\omega)$ for some $\tilde\omega \in
\lim\limits_{\longrightarrow}{C(\Gamma_l,A)}$. There exists $k\in\N$
and a cochain $\omega \in \Gamma_k$
such that $\tilde\omega = \pi_k[\omega]$. Hence $[\gamma] =
[\f\circ\pi_k(\omega)] = H(\f)\circ H(\pi_k)([\omega])$. \qed\bigskip

%===========================================================%
\subsection{PE simplicial as DL cohomology}
%===============================================

Recall the definition (\ref{eq-trho}) of %$\rho_k:\Omega_\p\to \Gamma_k$ and  that of 
$\tilde\rho_k : \R^d\to \Gamma_k$. In particular, to each simplex
$\sigma$ of $\p$ corresponds a simplex $\tilde\rho_k\circ\sigma$ of$\Gamma_k$.
Pulling these back over cochains we obtain module maps 
$\check \rho_k^*: C^*(\Gamma_k,A)\to C^*_{s-\p}(\p,A)$ 
$$ \check \rho_k^* (c)(\langle\sigma\rangle) =
c(\langle{\tilde\rho_k\circ\sigma}\rangle).$$  
They induce a new commuting diagram
$$ \xymatrix{
            C^n(\Gamma_k,A) \ar@/_/[rd]_{\check\rho_k^*} \ar[rr]^{\pi_{kj}}
            && C^n(\Gamma_j,A) \ar@/^/[ld]^{\check\rho_j^*}  \\
             & C_{s-\p}^n(\p,A) }$$
By the universal property of the direct limit there exists a
unique homomorphism $$\h:\lim\limits_{\substack{ \longrightarrow{} \\
l}}{C^n(\Gamma_l,A)} \longrightarrow C_{s-\p}^n(\p,A)$$ such that for all $k\in\N$,
$$ \h\circ \pi_k = \rho_k^*.$$
As Sadun explains in \cite{S2}, every s-PE cochain on $\p$ can be viewed
as the pull back of a cochain on $\Gamma_k$ provided $k$ is
sufficiently large and vice versa. In other words, the analogue of Lemma~\ref{inj} and Lemma~\ref{surj} are true and
$\h$ is an isomorphism. 
It is trivial that the $ \rho_k^*$ intertwine the differentials as they are defined in essentially the same way.
Hence the same proof as that for Thm.~\ref{thm-BS} yields that
 $\h$ induces an isomorphism between DL cohomology and s-PE simplicial cohomology.

%==========================================
\subsection{PE simplicial and PV-cohomology}
%==========================================
The last two sections can be summarized by saying that the strong
versions of PE simplicial  and PV-cohomology are abstractly the same
thing: in both cases the cochains arise as models for the direct limit
of the cochains of APG-complexes. 
In fact, by the universal property the module morphism
$\PEPV : C_{s-\p}(\p,A) \to \CC_{lc}(\Xi,A)$:
$$ \PEPV = \f\circ (\check\rho^*)^{-1}$$
is an isomorphism which, by Lemma~\ref{lem-com} and its analogon intertwines
the differentials. It hence induces an isomorphism between 
the strong versions of PE simplicial and PV cohomology.
By construction $\PEPV(\check\rho^*_k(a_\sigma)) =
\f_k(a_{\tilde\sigma}) = 
a_{\Xi(\tilde\sigma)}$ and hence $ \PEPV(c)$ is 
on the dense orbit through $\p$ given by
\begin{equation}\label{eq-PEPV}
 \PEPV(c)(\p-p) = c(\langle{\sigma_p}\rangle)
 \end{equation}
where $\sigma_{p}$ is the simplex of $\p$ whose puncture is on $p$.

\begin{lem}\label{cont} $\PEPV$ is an isometry and thus extends to an
  isomorphism, which we also denote by $\PEPV$, 
between  $C_{w-\p}(\p,A)$ and $\CC(\Xi,A)$.
\end{lem}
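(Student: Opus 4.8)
The plan is to prove the isometry property directly from the explicit formula (\ref{eq-PEPV}) and the density of the orbit through $\p$, and then to invoke the standard fact that a surjective isometry extends uniquely to the completions. Since $\PEPV$ is a module map, hence additive, it is enough to show for each degree $n$ and all $c,c'\in C_{s-\p}^n(\p,A)$ that
$$\sup_{\xi\in\Xi^n}\delta\bigl(\PEPV(c)(\xi),\PEPV(c')(\xi)\bigr)=\sup_\sigma\delta\bigl(c(\langle\sigma\rangle),c'(\langle\sigma\rangle)\bigr),$$
where on the right $\sigma$ runs over the $n$-simplices of $\p$. The point is that the function $g(\xi)=\delta(\PEPV(c)(\xi),\PEPV(c')(\xi))$ is continuous on $\Xi^n$, being the composition of the (continuous) metric $\delta$ with the locally constant functions $\PEPV(c)$ and $\PEPV(c')$.

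First I would use that the orbit points $\p-p$, with $p$ ranging over the punctures of the $n$-faces of $\p$, are dense in $\Xi^n$; this is the density already used for (\ref{eq-PEPV}) and follows from Lemma~\ref{lem-topo} together with the definition of $\Omega_\p$ as the closure of the orbit. For a continuous function the supremum over a dense subset equals the supremum over the whole space (approximate any $\xi$ by a net in the orbit and use continuity of $g$), so $\sup_{\xi\in\Xi^n}g(\xi)=\sup_p g(\p-p)$. Next, by (\ref{eq-PEPV}) we have $\PEPV(c)(\p-p)=c(\langle\sigma_p\rangle)$ and likewise for $c'$, where $\sigma_p$ is the $n$-simplex of $\p$ punctured at $p$; since $p\mapsto\sigma_p$ is a bijection between punctures of $n$-faces and $n$-simplices of $\p$, this turns $\sup_p g(\p-p)$ into $\sup_\sigma\delta(c(\langle\sigma\rangle),c'(\langle\sigma\rangle))$. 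The two displayed suprema thus agree, i.e.\ $\tilde\delta(\PEPV(c),\PEPV(c'))=\tilde\delta(c,c')$ and $\PEPV$ is an isometry.

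It then remains to pass to completions. By definition $C_{w-\p}(\p,A)$ is the closure, hence the completion, of $C_{s-\p}(\p,A)$ inside the complete space $C_b(\p,A)$. Dually, $\CC(\Xi,A)$ is the completion of $\CC_{lc}(\Xi,A)$: on the compact, totally disconnected space $\Xi^n$ every continuous $A$-valued function is a uniform limit of locally constant ones, which one sees by covering $\Xi^n$ by finitely many acceptance zones (Lemma~\ref{lem-topo}) on each of which the function varies by less than a prescribed $\varepsilon$. A bijective isometry between two metric spaces extends uniquely to a bijective isometry of their completions; applied to the isomorphism $\PEPV:C_{s-\p}(\p,A)\to\CC_{lc}(\Xi,A)$ this produces the asserted isomorphism between $C_{w-\p}(\p,A)$ and $\CC(\Xi,A)$, and since $d_S$ and $d_{PV}$ are both continuous the extension still intertwines the differentials.

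I expect the only genuinely delicate points to be the two density statements — that the punctured orbit is dense in $\Xi^n$ and that locally constant functions are dense in the continuous ones — both of which rest on Lemma~\ref{lem-topo} and the total disconnectedness of $\Xi^n$. Once these are in place, the isometry is a one-line consequence of (\ref{eq-PEPV}), and the remainder is the formal extension-to-completions argument.
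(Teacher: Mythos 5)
Your proof is correct and follows essentially the same route as the paper, whose own proof is just the two assertions that the isometry property is ``direct'' and that a bijective isometry extends bijectively to completions. You have merely made explicit the two facts the paper leaves implicit — density of the punctured orbit $\{\p-p\}$ in $\Xi^n$ (giving the isometry via Eq.~(\ref{eq-PEPV})) and density of $\CC_{lc}(\Xi^n,A)$ in $\CC(\Xi^n,A)$ by compactness and total disconnectedness (identifying $\CC(\Xi,A)$ as the completion) — both correctly resting on Lemma~\ref{lem-topo}.
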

\pn\textbf{\emph{Proof.}} That $\PEPV$ is an isometry is direct. 
The continuous extension of a bijective isometry is bijective as well.
\qed\bigskip

\begin{cor}\label{cor-PEPV}
$\PEPV$ induces a isomorphism 
%$H(\PEPV)$ 
between the weakly
pattern equivariant cohomology $H_{w-P}(\p,A)$ and the weak 
PV-cohomology $H_{PV}(\Gamma_0,\CC(\Xi,A))$.
\end{cor}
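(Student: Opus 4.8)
The plan is to upgrade the extended map $\PEPV$ from Lemma~\ref{cont} to a \emph{chain} isomorphism of the weak complexes, and then invoke the standard fact that a bijective chain map induces an isomorphism in cohomology. Concretely, Lemma~\ref{cont} already supplies a bijective isometry $\PEPV : C_{w-\p}(\p,A) \to \CC(\Xi,A)$, so the only thing left to verify is that this extension intertwines the two weak differentials, i.e.\ that $\PEPV\circ d_S = d_{PV}\circ \PEPV$ holds on all of $C_{w-\p}(\p,A)$. On the dense subcomplex $C_{s-\p}(\p,A)$ this is precisely the intertwining relation already established for the strong versions through Lemma~\ref{lem-com} and its simplicial analogue (which is exactly how $\PEPV$ was shown to induce the strong isomorphism).

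To push this relation from the dense subspace to the whole completion I would argue by continuity. The weak PV-differential $d_{PV}$ is continuous on $\CC(\Xi,A)$ by the Lemma preceding the definition of weak PV-cohomology, and the simplicial coboundary $d_S$ is continuous on $C_{w-\p}(\p,A)$ in the metric of uniform convergence, as noted in the definition of w-PE simplicial cohomology. Since $\PEPV$ is an isometry, both $\PEPV\circ d_S$ and $d_{PV}\circ\PEPV$ are continuous maps $C_{w-\p}(\p,A)\to\CC(\Xi,A)$ that agree on the dense subspace $C_{s-\p}(\p,A)$; hence they agree everywhere. Thus the extended $\PEPV$ is a chain map, and being also bijective (Lemma~\ref{cont}), its inverse is automatically a chain map as well, so $\PEPV$ is a chain isomorphism of $(C_{w-\p}^*(\p,A),d_S)$ with $(\CC(\Xi^*,A),d_{PV})$.

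Finally, a chain isomorphism carries cocycles to cocycles and coboundaries to coboundaries, with its inverse doing likewise, so it restricts to bijections $Z^n\to Z^n$ and $B^n\to B^n$ and descends to an isomorphism on cohomology. This gives the asserted isomorphism $H_{w-\p}(\p,A)\cong H_{PV}(\Gamma_0,\CC(\Xi,A))$. There is no genuine obstacle beyond the continuity bookkeeping of the previous paragraph; the real content of the corollary is already contained in the isometry statement of Lemma~\ref{cont}, which is what guarantees that the purely algebraic strong-case isomorphism survives the passage to the topological closures.
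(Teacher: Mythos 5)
Your proposal is correct and follows the same route as the paper: the paper's own proof is exactly the observation that, by continuity, the extended $\PEPV$ of Lemma~\ref{cont} still intertwines the differentials and is therefore a bijective chain map. You have merely spelled out the density-and-continuity bookkeeping (continuity of $d_S$, $d_{PV}$, and the isometry $\PEPV$, agreement on the dense strong subcomplex) that the paper leaves implicit.
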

\begin{proof} By continuity $\PEPV$ intertwines the
  differentials. Hence $\PEPV$ is a bijective chain map. 
\end{proof}
%%%%%%%%%%%%%%%%%%%%%%%%%%%%%%%%%%%%%%%%%%%%%%%%%%%%%

\section{PE de Rham versus PE simplicial cohomology}
\subsection{A PE de Rahm theorem}
The de Rham theorem for smooth manifolds states that, given a smooth
triangulation of the manifold and hence a simplicial decomposition,
then the de Rham cohomology of the manifold and
the simplicial cohomology of the complex are isomorphic. Moreover, the
isomorphism has a very simple form, it is induced from a chain map
which associates to a $k$-form the $k$-cochain whose
evaluation on the generator associated with a $k$-simplex is given by
integrating the form over the simplex. A very careful exposition of
the proof is given in \cite{ST} for the case of finite simplicial
complexes in euclidean spaces.

We shall adapt this proof to obtain a PE version of the de Rham
theorem the role of the manifold 
being played by $\rn$ with its simplicial decomposition defined by the
tiling $\p$. 
This then will lead to an explicit isomorphism between the s-PE de Rham
cohomology of $\rn$ and the s-PE simplicial cohomology of $\p$ with
values in $\R$.
We furthermore show that the maps involved are continuous so that the
isomorphism extends to an explicit isomorphism between the weak
versions.

Recall that $\p$ defines a smooth triangulation
%not only a $\Delta$-complex structure 
on $\rn$ but even a smooth triangulation. This allows us to integrate a
$k$-form $\omega\in\A_b^k(\rn)$ over a $k$-simplex $\sigma$. In fact
we write $\int_\sigma \omega$ to mean $\pm$ the integral of $\omega$
over the set $\im  \sigma $ where the sign is $+$ provided
the orientation of $\sigma$ is the same than that induced on
$\im(\sigma)$ by the (chosen) orientation of $\rn$, and $-$ otherwise.
We equip $C_b(\p,\R)$ with the sup-norm: $\|c\| = \sup_\sigma
|c(\langle \sigma\rangle)|$. This makes it a complete real vector
space on which the simplicial differential acts continuously. 
\begin{df}
Let $J_l:\A_b^l(\rn)\to C_b^l(\p,\R)$ be given by 
$$J_l(\omega)(\langle \sigma\rangle) = \int_\sigma \omega$$
for any $l$-simplex $\sigma$.
\end{df}
Our aim is to prove the following theorem
\begin{thm}~\label{thm-deRham} 
$J_l$ induces an isomorphism between $H^l_{s-\p}(\rn)$ and $H^l_{s-\p}(\p,\R)$ 
and also between $H^l_{w-\p}(\rn)$ and $H^l_{w-\p}(\p,\R)$.
\end{thm}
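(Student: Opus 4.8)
The plan is to exhibit $J=(J_l)$ as one half of an explicit homotopy equivalence of cochain complexes, following the classical de Rham argument of \cite{ST} but carried out directly on $\rn$ with its pattern equivariant simplicial decomposition, and then to extract both the strong and the weak statements from the very same maps. Concretely I would construct, besides $J$, a Whitney-type map $W=(W_l)$ from cochains back to forms and a chain homotopy $P$ on forms, with $J\circ W=\mathrm{id}$ on cochains and $W\circ J-\mathrm{id}=dP+Pd$ on forms. Granting these, $J$ is a quasi-isomorphism; the whole theorem then reduces to checking that $J$, $W$ and $P$ each respect the pattern equivariant structure and are continuous for the Fréchet topology on forms and the sup-norm topology on cochains, so that everything passes to the weak completions.

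First I would handle the forward map. That $J$ is a cochain map, $J_{l+1}\circ d=d_S\circ J_l$, is Stokes' theorem $\int_\sigma d\omega=\int_{\partial\sigma}\omega$ combined with the definition of $d_S$. Strong equivariance is then immediate: by Lemma~\ref{lem-simpl} the number $\int_\sigma\omega$ depends only on $\omega$ near $\mathrm{im}\,\sigma$, so if $\omega$ is s-PE with range $R$ then $J_l(\omega)(\langle\sigma\rangle)$ depends only on the $k$-neighborhood of $\sigma$ for $k$ large enough, uniformly in $\sigma$ because there are finitely many simplex types up to translation; hence $J_l(\A^l_{s-\p}(\rn))\subset C^l_{s-\p}(\p,\R)$. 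For the weak version the relevant estimate is $|J_l(\omega)(\langle\sigma\rangle)|\le\mathrm{vol}(\sigma)\,\|\omega\|_\infty$, and finite local complexity bounds $\mathrm{vol}(\sigma)$ uniformly, so $\|J_l(\omega)\|\le C\,s_0(\omega)$. Thus $J_l$ is continuous and extends to $\A^l_{w-\p}(\rn)\to C^l_{w-\p}(\p,\R)$.

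For the reverse direction I would use (smoothed) Whitney forms: to a cochain $c$ assign $W_l(c)=\sum_\sigma c(\langle\sigma\rangle)\,\mu_\sigma$, where $\mu_\sigma$ is built from the barycentric coordinates of $\sigma$ and normalized so that $\int_\sigma\mu_\tau=\delta_{\sigma\tau}$, which gives $J\circ W=\mathrm{id}$. Because the simplicial structure is pattern equivariant and of finite local complexity, each $\mu_\sigma$ is a translate of one of finitely many model forms; hence $W_l$ carries s-PE cochains to s-PE forms, and the uniform bounds on the derivatives of the finitely many models give $s_k(W_l(c))\le C_k\|c\|$ for every $k$, so $W_l$ is continuous into the Fréchet topology and extends to the weak complexes. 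It then remains to transcribe the explicit chain homotopy $P$ of \cite{ST}, which is assembled cell by cell from a fixed deformation retraction of the standard simplex and is therefore again, up to translation, one of finitely many local models.

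The hard part will be producing $W$ and $P$ that are simultaneously smooth, pattern equivariant, and uniformly bounded in every seminorm $s_k$. The naive Whitney forms are only piecewise smooth across faces, so a pattern equivariant smoothing must be inserted without spoiling either $J\circ W=\mathrm{id}$ or the homotopy relation, and the derivatives created by that smoothing and by the retraction defining $P$ must be controlled uniformly over the infinitely many simplices; equivalently one needs $s_k(P\omega)\le C_k'\,s_{m(k)}(\omega)$ with $m(k)$ and $C_k'$ independent of $\omega$. This is exactly where finite local complexity is decisive, since it collapses all of these estimates to finitely many model computations, just as the finiteness of the complex does in \cite{ST}. Once $J$, $W$ and $P$ are known to be equivariant and Fréchet-continuous, the strong isomorphism follows from the algebraic identities, and the weak isomorphism follows because $\A_{s-\p}\subset\A_{w-\p}$ and $C_{s-\p}\subset C_{w-\p}$ are dense and all three maps extend continuously, so on the weak complexes $J$ is again a quasi-isomorphism with $W$ and $P$ as inverse data, exactly in the spirit of Lemma~\ref{cont}.
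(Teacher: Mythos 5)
Your treatment of the forward map $J$ and of the reverse map $W$ is sound and in fact coincides with the paper's: the map $\alpha_l$ of Lemma~\ref{lem-al}, built from the pattern equivariant partition of unity $\{g_v\}$ subordinate to the stars of vertices, \emph{is} precisely the smoothed Whitney map you describe (the $g_v$ play the role of smoothed barycentric coordinates, so the piecewise-smoothness problem you flag is already solved there), and it satisfies $J_l\circ\alpha_l=\mathrm{id}$, intertwines $d$ and $d_S$, preserves s-PE and w-PE, and obeys the uniform bound $s_k(\alpha_l(c))\leq N_l\|c\|\max_\sigma s_k(\omega_\sigma)$ by finite local complexity, exactly as you argue. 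This already yields surjectivity of $H(J_l)$ in both the strong and weak settings.

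The genuine gap is the chain homotopy $P$ with $W\circ J-\mathrm{id}=dP+Pd$, which is the entire content of injectivity and which you do not construct: you propose to ``transcribe the explicit chain homotopy $P$ of \cite{ST}'', but \cite{ST} contains no such global homotopy operator. Singer and Thorpe prove injectivity differently, by showing directly that a closed $l$-form $\omega$ with $J_l(\omega)=d_Sc$ is exact, building a primitive $\tau$ inductively over the skeleta of the complex; this is the route the paper adapts, and it is where all the analytic work sits. Concretely, the paper needs controlled Poincar\'e homotopies $h^{U,u}_k$ on bounded star-shaped sets, with translation equivariance $h_k^{U-x,u-x}={\t^x}^*\circ h_k^{U,u}$ and Fr\'echet continuity (Lemma~\ref{lem-h}), then inductively constructed extension operators $\beta^r(\sigma)$ and $\gamma^r(\sigma)$ on $\epsilon$-neighborhoods of simplices and their boundaries, each shown to be translation-equivariant and continuous, glued over the $k$-skeleta $\p^{(k)}$, and finally composed as $\tau=\tilde\beta_n\circ\cdots\circ\tilde\beta_0(\omega)$ with correction terms involving $\alpha_{l-1}$ and $J_{l-1}$ at the critical degree. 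None of this is a routine ``finitely many model computations'' reduction: the operators act on $\epsilon$-neighborhoods where simplices overlap, and equivariance and the seminorm bounds must be tracked through the double induction on $r$ and the skeleton dimension. Your final paragraph is right that \emph{if} a PE, Fr\'echet-continuous $P$ existed, the weak case would follow immediately by density and continuity (arguably more cleanly than in the paper, where w-PE preservation must be checked for each of $h$, $\beta^r$, $\gamma^r$, $\tilde\beta_k$ separately); such homotopies do exist in the literature on Whitney forms (e.g.\ Dodziuk's work), but importing one would require proving its equivariance and continuity from scratch, which is work of the same order as the paper's skeletal argument. As it stands, the decisive half of the theorem rests on a citation that does not support it.
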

We should say that the first part could also be derived from the de
Rham type theorems for transversally locally constant tangential forms
\cite{KP} or for branched manifolds \cite{S2}, but these proofs use  
the \v{C}ech-de Rham double complex and hence yield a priori isomorphisms
with \v{C}ech cohomology. 
We provide here a proof avoiding \v{C}ech cohomology and double complexes
at all. So it is more elementary but also longer. But it has the
advantage of extending directly to the topological closures (weak
versions).    
\begin{lem} \label{lem-J}
$J_l$ has the following properties:
\begin{enumerate}
\item $J_{l+1}\circ d = d_S \circ J_l$.
\item $J_l$ is continuous.
\item 
If $\omega$ is s-PE then $J_l(\omega)$ is s-PE.
\item 
If $\omega$ is w-PE then $J_l(\omega)$ is w-PE.
\end{enumerate}
\end{lem}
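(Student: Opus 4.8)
The plan is to establish (1) and (2) by direct computation, to derive (3) by combining the pattern equivariance of $\omega$ with the translation behaviour of the integral, and finally to obtain (4) from (2) and (3) by a density argument.

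First I would prove (1) by Stokes' theorem, applied cell by cell. Since $\p$ defines a smooth triangulation of $\rn$, each characteristic map extends to a diffeomorphism of a neighbourhood of $\Delta^{l+1}$, so Stokes applies to every simplex. Recalling that the simplicial coboundary is $d_S(c)(\langle\tau\rangle)=\sum_{i=0}^{l+1}(-1)^i c(\langle\partial_i\tau\rangle)$ and that the oriented boundary of $\tau$ is $\partial\tau=\sum_i(-1)^i\partial_i\tau$, I obtain
$$J_{l+1}(d\omega)(\langle\tau\rangle)=\int_\tau d\omega=\int_{\partial\tau}\omega=\sum_{i=0}^{l+1}(-1)^i\int_{\partial_i\tau}\omega=d_S(J_l\omega)(\langle\tau\rangle).$$
The pattern equivariant orientation supplied by Lemma~\ref{lem-simpl} guarantees that the induced orientations of the faces, and hence the signs, are the standard ones. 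For (2), finite local complexity yields only finitely many simplices up to translation, hence a uniform bound $V$ on their volumes (absorbing the constant comparing the chosen norm on $\Lambda^l({\R^d})^\ast$ with Lebesgue integration). Then $|J_l(\omega)(\langle\sigma\rangle)|=|\int_\sigma\omega|\le V\,\|\omega\|_\infty$ for every $\sigma$, so $\|J_l(\omega)\|\le V\,s_0(\omega)$ and $J_l$ is continuous, already for the single seminorm $s_0$ and a fortiori for the Fr\'echet topology.

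The heart of the matter is (3), and this is where I expect the bookkeeping to require care. Suppose $\omega$ is pattern equivariant with range $R$. I would first fix $k$ so large that the $k$-neighbourhood of any $l$-simplex determines the tiling on the $R$-ball around each of its points. If $\sigma_1$ and $\sigma_2$ carry the same $k$-neighbourhood then, by Lemma~\ref{lem-simpl}, they are related by a translation $\sigma_2=\t^x\circ\sigma_1$ that also matches their orientations. For every point $p$ in the support of $\sigma_1$ the data $B_R[\t^p(\p)]$ and $B_R[\t^{p-x}(\p)]$ agree, whence $\omega(p)=\omega(p-x)$; since $\t^x$ is a translation, its derivative is the identity, so this says precisely that $(\t^x)^*\omega=\omega$ on the support of $\sigma_1$. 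Pulling back and using that translations preserve integration then gives $\int_{\sigma_2}\omega=\int_{\sigma_1}(\t^x)^*\omega=\int_{\sigma_1}\omega$, i.e.\ $J_l(\omega)(\langle\sigma_1\rangle)=J_l(\omega)(\langle\sigma_2\rangle)$. Hence $J_l(\omega)$ depends only on the $k$-neighbourhood of the simplex and is s-PE. The delicate points are the choice of $k$ that forces the $R$-balls to be controlled by the $k$-collaring, and the consistent tracking of translation directions and orientations.

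Finally (4) follows formally from (2) and (3). A w-PE form $\omega$ is by definition a Fr\'echet limit of s-PE forms $\omega_n$; by (2), $J_l(\omega_n)\to J_l(\omega)$ in the sup-norm of $C_b^l(\p,\R)$, and by (3) each $J_l(\omega_n)$ is s-PE. Since $C_{w-\p}(\p,\R)$ is exactly the sup-norm closure of the s-PE cochains, $J_l(\omega)$ lies in it, so $J_l(\omega)$ is w-PE.
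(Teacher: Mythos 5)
Your proposal is correct and follows essentially the same route as the paper: Stokes' theorem for (1), the uniform volume bound $\|J_l(\omega)\|\leq \|\omega\|_\infty\sup_\sigma \mathrm{vol}(\sigma)$ for (2), and deducing (4) from (3) by continuity and density. Your careful expansion of (3) --- choosing $k$ so the $k$-neighbourhood controls the $R$-ball and using Lemma~\ref{lem-simpl} to match translations and orientations --- is exactly the argument the paper compresses into ``the third statement is evident.''
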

\pn\textbf{\emph{Proof.}}
The first statement is Stokes theorem. Continuity follows from
$\|J_l(\omega)\|\leq \sup_\sigma
|\int_\sigma \omega|\leq \|\omega\|_\infty\sup_\sigma vol(\sigma)$.
The third statement is evident and implies the last one by
 continuity of $J_l$.
\qed\bigskip

The first step in showing a de Rham theorem is the construction of right
inverse maps for $J_l$. We proceed
as in \cite{ST}. 

\begin{lem} \label{lem-al}
There are module maps $\alpha_l:C_b^l(\p,\R)\to \A_b^l(\rn)$ which
satisfy the following properties: 
\begin{enumerate}
\item $J_l\circ\alpha_l = id$.
\item $\alpha_{l+1}\circ d_S = d \circ \alpha_l$.
\item $\alpha_l$ is continuous.
\item 
If $c$ is s-PE then $\alpha_l(c)$ is s-PE.
\item 
If $c$ is w-PE then $\alpha_l(c)$ is w-PE.
\end{enumerate}
\end{lem}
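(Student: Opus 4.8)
The plan is to construct the maps $\alpha_l$ explicitly by mimicking the classical construction in \cite{ST}, which builds a right inverse to integration via a partition-of-unity/barycentric-coordinate scheme on each simplex, and then to verify that this construction respects pattern equivariance. The crucial observation is that the standard construction is \emph{local}: the value of $\alpha_l(c)$ at a point $x\in\rn$ depends only on $c$ evaluated on the simplices of a small neighborhood of $x$ (essentially the star of the simplex containing $x$). First I would recall the Whitney-type construction from \cite{ST}: to an $l$-cochain $c$ one associates the form $\alpha_l(c) = \sum_\sigma c(\langle\sigma\rangle)\, W_\sigma$, where $W_\sigma$ is the Whitney elementary form built from the barycentric coordinate functions of the simplex $\sigma$ (so that $W_\sigma = l!\sum_{i=0}^l (-1)^i \mu_{\sigma(i)}\, d\mu_{\sigma(0)}\wedge\cdots\widehat{d\mu_{\sigma(i)}}\cdots\wedge d\mu_{\sigma(l)}$, with $\mu_v$ the barycentric coordinate attached to vertex $v$). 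The barycentric coordinates are smooth on the open stars provided by the smooth triangulation, and it is a classical computation that $\int_\tau W_\sigma = \delta_{\sigma\tau}$, giving property (1), and that $\alpha_{l+1}\circ d_S = d\circ\alpha_l$, giving property (2); I would cite \cite{ST} for these two purely local/algebraic facts rather than recompute them.

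Next I would establish continuity (property 3). Since each barycentric coordinate $\mu_v$ and its derivatives are uniformly bounded on $\rn$ (there are only finitely many shapes of simplices up to translation by finite local complexity, and the triangulation is smooth with bounded geometry), each Whitney form $W_\sigma$ satisfies a uniform bound $\|W_\sigma\|_\infty \le M$ independent of $\sigma$, and similarly for its derivatives up to any fixed order $k$. At a given point $x$ only the boundedly-many $W_\sigma$ whose support contains $x$ contribute, so $s_k(\alpha_l(c)) \le C_k\,\|c\|$ for a constant $C_k$ depending only on $k$ and the local geometry; this gives continuity with respect to the sup-norm on cochains and the Fr\'echet seminorms $s_k$ on forms simultaneously.

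The remaining point, and the one requiring genuine care, is the equivariance properties (4) and (5). For (4): suppose $c$ is $s$-PE, so $c(\langle\sigma\rangle)$ depends only on the $k$-neighborhood of $\sigma$ for some fixed $k$. I would argue that the value $\alpha_l(c)(x)$, being a finite sum over the simplices whose open star contains $x$ weighted by their $c$-values and by locally-determined barycentric data, depends only on the $(k+1)$-neighborhood (or some fixed enlargement) of the simplex containing $x$. Concretely, by Lemma~\ref{lem-simpl} the simplicial structure is itself pattern equivariant, so if $B_{R}[\t^x(\p)] = B_{R}[\t^y(\p)]$ for $R$ large enough to see all relevant simplices together with their $k$-neighborhoods, then the translation $\t^{x-y}$ carries the local configuration at $x$ to that at $y$, matching both the barycentric coordinate functions (they transform covariantly under the translation) and the cochain values (by $s$-PE of $c$). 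Hence $\alpha_l(c)(x) = \alpha_l(c)(y)$, so $\alpha_l(c)$ is $s$-PE with range $R$. Property (5) then follows from (4) together with the already-established continuity (3): if $c$ is $w$-PE it is a limit in $\|\cdot\|$ of $s$-PE cochains $c_n$, each $\alpha_l(c_n)$ is $s$-PE by (4), and $\alpha_l(c_n)\to\alpha_l(c)$ in every seminorm $s_k$ by continuity, so $\alpha_l(c)$ lies in the closure $\A_{w-\p}^l(\rn)$.

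\medskip
\noindent\textbf{Main obstacle.} I expect the delicate step to be property (4): pinning down exactly how the equivariance range of $\alpha_l(c)$ relates to that of $c$, and checking that the barycentric coordinate functions transform correctly under the translations that witness $s$-PE. This requires using Lemma~\ref{lem-simpl} in an essential way—the pattern equivariance of the \emph{characteristic maps} (not just of the abstract simplicial structure) is what guarantees that translating a matching patch sends barycentric coordinates to barycentric coordinates, so that the locally-defined Whitney forms agree. Once this geometric compatibility is made precise, (5) is a soft consequence of continuity and everything else is either classical (from \cite{ST}) or a routine uniform bound.
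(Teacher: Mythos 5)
There is one genuine gap, and it sits exactly where your construction departs from the paper's: you build $\alpha_l$ from the Whitney forms $W_\sigma$ made of honest barycentric coordinates $\mu_v$. But barycentric coordinates are not smooth on the open stars: within the star of $v$, each $\mu_v$ is smooth only on each closed simplex separately, and $d\mu_v$ jumps across the interior codimension-one faces of the star. Hence $W_\sigma$ is merely Lipschitz (a ``flat'' form), and your $\alpha_l(c)$ does not land in $\A_b^l(\rn)$, which the paper defines as the space of bounded \emph{smooth} forms. This is not cosmetic: property (3) is continuity with respect to the Fr\'echet seminorms $s_k$, which involve derivatives of all orders, and properties (4) and (5) concern $\A_{s-\p}^*(\rn)$ and its Fr\'echet closure $\A_{w-\p}^*(\rn)$, both consisting of smooth forms. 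Your claimed uniform bound on ``derivatives of the barycentric coordinates up to any fixed order $k$'' does not exist beyond first order, and a form whose first derivatives jump cannot lie in the $s_1$-closure of smooth forms, so (5) fails for $\alpha_l$ as you have written it.

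The paper's proof fixes precisely this point: instead of barycentric coordinates it uses a smooth partition of unity $g_v=\check g_v/\sum_{v'}\check g_{v'}$ subordinate to the star covering, with $\check g_v$ smooth, equal to $1$ well inside $St(v)$, vanishing near its boundary, and --- this is the pattern-equivariant input playing the role of your appeal to Lemma~\ref{lem-simpl} --- chosen so that $\check g_v$ and $\check g_{v'}$ agree up to translation whenever $v$ and $v'$ have translation-congruent $1$-neighborhoods. Then $\omega_\sigma=l!\sum_{i=0}^l(-1)^i g_{v_i}\,dg_{v_0}\wedge\cdots\widehat{dg_{v_i}}\cdots\wedge dg_{v_l}$ is smooth, \cite{ST} still supplies (1) and (2) for this smoothed version, and your remaining arguments then go through essentially as you wrote them: the continuity estimate $s_k(\alpha_l(c))\leq N_l\,\|c\|\,\max_\sigma s_k(\omega_\sigma)$ (now meaningful at all orders, since the supports lie in stars and $N_l$ bounds the number of $l$-simplices per star), strong equivariance because $\omega_\sigma$ is determined by the $2$-neighborhood of $\sigma$ (the normalization $\sum_{v'}\check g_{v'}$ forces range $2$ rather than the range $1$ your barycentric picture suggests), and (5) by continuous extension exactly as in your last step. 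So your overall architecture --- cite \cite{ST} for (1)--(2), uniform local bounds for (3), equivariant choice of the coordinate functions for (4), density plus continuity for (5) --- coincides with the paper's; the one missing idea is to replace the barycentric coordinates by an equivariantly chosen \emph{smooth} partition of unity before running it.
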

\pn\textbf{\emph{Proof.}}
We follow the construction of $\alpha_l$ given in \cite{ST}. 
It is based on a partition of
unity $\{g_v\}_{v\in\p^{(0)}}$ which is subordinate to the covering given
by the stars of the vertices $v$: The star $St(v)$ of $v$ is simply the
interior of the 
$1$-neighborhood of $v$, i.e.\ the union over all open simplices
touching $v$. Let $F_v$ be the compact set points of $St(v)$ which have at least
distance $\frac1{n+1}$ from the boundary of $St(v)$ and $G_v$ the
closed set of points which have at most distance $\frac1{n+2}$ from
the complement of $St(v)$. As is well known there exists a smooth positive
function $\check g_v$ which vanishes on $G_v$ and equals $1$ on
$F_v$. Clearly we 
can require that the choice of  $\check g_v$ be made so that if $v$ and
$v'$ have the same $1$-neighborhood up to translation then 
$\check g_v$ and  $\check g_{v'}$ coincide up to translation by $v-v'$.
Now $g_v = \check g_v / \sum_{v'\in\p^{(0)}}\check g_{v'}$ and
$\alpha_l$ is given as follows. Let $\sigma$
%=\langlev_0,\cdots,v_l\rangle$ 
be a $l$-simplex  and $v_i$ the $i$th vertex, i.e.\ the image under
$\sigma$ of $x_i\in\Delta^l$. Then 
$$ \alpha_l(1_\sigma)=\omega_\sigma := l!\sum_{i=0}^l (-1)^i g_{v_i}
dg_{v_0}\wedge\cdots \widehat{dg_{v_i}}\cdots dg_{v_l}.$$
A proof of the first two properties can be found in \cite{ST}.
The above formula shows that $\alpha_l(1_\sigma) =
{\t^x}^*\alpha_l(1_{\sigma'})$
(${\t^x}^*$ denotes the pull back of ${\t^x}$)
if $\sigma={\t^x}(\sigma')$ and their $2$-neighborhoods coincide.
 $\alpha_l(c)$ is therefore s-PE provided $c$ is a s-PE cochain.

To show continuity let $c= \sum_{\sigma\in\p^{(l)}} r_\sigma1_\sigma$,
$r_\sigma\in\R$. Then
$$ s_k (\alpha_l(c)) \leq \sum_{|\kappa|\leq k}\|D^\kappa \sum_{\sigma\in\p^{(l)}}
  r_\sigma  \omega_\sigma\|_\infty. $$
Since the support of $\omega_\sigma$ is contained in $St(v)$ and the
number of $l$-simplices in a star is uniformly bounded, let's say by
$N_l$, we have $\sup_x |D^\kappa \sum_{\sigma\in\p^{(l)}}
  r_\sigma  \omega_\sigma(x)|\leq N_l \sup_{\sigma\in\p^{(l)}}|r_\sigma|
  \sup_x |D^\kappa\omega_\sigma(x)|$ and hence 
$$ s_k (\alpha_l(c))\leq N_l\|c\| \max_{\sigma\in\p^{(l)}} s_k(\omega_\sigma).$$
Hence $\alpha_l$ is continuous from which the last statement follows
by continuous extension.  
\qed\bigskip

\begin{cor} $J_l$ induces a surjective homomorphism between
  $H^l_{s-\p}(\rn)$ and $H^l_{s-\p}(\p,\R)$  
and also between $H^l_{w-\p}(\rn)$ and $H^l_{w-\p}(\p,\R)$.
\end{cor}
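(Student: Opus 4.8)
The plan is to deduce the corollary formally from Lemma~\ref{lem-J} and Lemma~\ref{lem-al}, treating the strong and weak cases in parallel. By Lemma~\ref{lem-J}(1) the map $J_l$ is a cochain map, and by Lemma~\ref{lem-J}(3) and (4) it carries s-PE forms to s-PE cochains and w-PE forms to w-PE cochains. Hence it restricts to a cochain map between the subcomplexes $(\A_{s-\p}^*(\rn),d)$ and $(C_{s-\p}^*(\p,\R),d_S)$, and likewise between the weak versions, so it descends to homomorphisms $J_l^*:H^l_{s-\p}(\rn)\to H^l_{s-\p}(\p,\R)$ and $J_l^*:H^l_{w-\p}(\rn)\to H^l_{w-\p}(\p,\R)$. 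The only thing left to establish is surjectivity of these induced maps.

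For surjectivity I would use the right inverse $\alpha_l$ supplied by Lemma~\ref{lem-al}. Let $c$ represent a class in $H^l_{s-\p}(\p,\R)$, so that $c$ is an s-PE cochain with $d_S c=0$. Set $\omega:=\alpha_l(c)$. By Lemma~\ref{lem-al}(4) the form $\omega$ is again s-PE, and by Lemma~\ref{lem-al}(2) it is closed, since $d\omega=d\alpha_l(c)=\alpha_{l+1}(d_S c)=0$. Thus $\omega$ represents a class $[\omega]\in H^l_{s-\p}(\rn)$, and finally Lemma~\ref{lem-al}(1) gives $J_l(\omega)=J_l\alpha_l(c)=c$, so $J_l^*[\omega]=[c]$. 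This proves surjectivity in the strong case.

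The weak case is identical word for word, replacing the appeal to Lemma~\ref{lem-J}(3) and Lemma~\ref{lem-al}(4) by their weak counterparts Lemma~\ref{lem-J}(4) and Lemma~\ref{lem-al}(5): given a closed w-PE cochain $c$, the form $\alpha_l(c)$ is a closed w-PE form with $J_l\alpha_l(c)=c$. I expect no genuine obstacle here, since all the analytic content — the continuity and pattern equivariance of $J_l$ and $\alpha_l$, and the identity $J_l\circ\alpha_l=\mathrm{id}$ — has already been packaged into the two preceding lemmas; surjectivity is then a purely formal consequence of possessing a pattern-equivariant chain-level right inverse that commutes with the differentials. The genuinely hard part is not surjectivity but the remaining half of Theorem~\ref{thm-deRham}, namely injectivity: there one cannot use $\alpha_l$ as a two-sided inverse (it is not one), and one must construct a pattern-equivariant, continuous cochain homotopy between $\alpha_l\circ J_l$ and the identity, following the subdivision/prism-operator argument of \cite{ST} while verifying that it preserves both the s-PE and w-PE subcomplexes.
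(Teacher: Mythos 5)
Your proposal is correct and coincides with the paper's (implicit) argument: the corollary is stated immediately after Lemma~\ref{lem-al} precisely because it follows formally from $J_l$ being a chain map preserving the s-PE and w-PE subcomplexes (Lemma~\ref{lem-J}(1),(3),(4)) together with $\alpha_l$ being a pattern-equivariant chain-level right inverse (Lemma~\ref{lem-al}(1),(2),(4),(5)). Your closing remark is also accurate --- the remaining content of Theorem~\ref{thm-deRham} is injectivity, which the paper handles via the controlled Poincar\'e-lemma constructions ($h_k^{U,u}$, $\beta^r$, $\gamma^r$) rather than a prism-operator homotopy.
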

\newcommand{\eps}{\epsilon}
\newcommand{\ZZ}{\mathcal Z}
The proof injectivity of these maps  is essentially based on
Poincar\'e's lemma.  
In contrast to the usual case we need however good control on the
contracting homotopies involved. Let $U$ be an open star-shaped subset
of $\rn$ and $u\in U$ a center, i.e.\ for all $x\in U$ we have
$\forall 0\leq t \leq 1: tx + (1-t)u \in U$. Then we denote by
$h_{k-1}^{U,u}:\A^k_b(U)\to \A^{k-1}_b(U)$, $k>0$, the following module map: 
% $ h_k^{U,u} = \gamma_{k-1}^{U,u}\circ \imath_{X^u}$ where $X^u$ is the 
% vector-field given by $X^u(x) = \sum_i
% (x_i-u_i)\frac{\partial}{\partial x_i}$,  $\imath_X$ is insertion of
% $X$ into a $k$-form (interior multiplication), and  
% $$\gamma_k^{U,u}(f dx_{i_1}\wedge\cdots dx_{i_k})(x) = 
% \left(\int_0^1 t^{k-1} f(tx+(1-t)u)dt\right)dx_{i_1}\wedge\cdots
% dx_{i_k}.$$
$$h_{k-1}^{U,u}(f dx_{i_1}\wedge\cdots dx_{i_k})(x) = I^u_{k-1}(f)(x)
\sum_{j=1}^k (-1)^j (x_{i_j}\!-\!u_{i_j}) dx_{i_1}\!\wedge\cdots
\widehat{dx_{i_j}}\!\cdots dx_{i_k}$$
where
$$I^u_{k-1}(f)(x)= \int_0^1 t^{k-1} f(tx+(1-t)u)dt.$$
\begin{lem}\label{lem-h}
Let $U$ be bounded and as above. $h_k^{U,u}$ have the following properties
\begin{enumerate}
\item $h_k^{U,u} \circ d = d \circ h_{k-1}^{U,u}$ for $k\geq 1$,
\item $h_k^{U-x,u-x} = {\t^x}^* \circ h_k^{U,u}$,
\item $h_k^{U,u}$ is continuous.
\end{enumerate}
\end{lem}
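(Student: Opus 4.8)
The plan is to prove all three properties by direct manipulation of the explicit integral formula defining $h^{U,u}_k$, since each reduces to a statement about the straight-line homotopy $t\mapsto tx+(1-t)u$ and the averaging operator $I^u_{k-1}$.

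The relation in property (1), $h_k^{U,u}\circ d = d\circ h_{k-1}^{U,u}$ for $k\ge 1$, is the homotopy identity at the heart of the Poincar\'e lemma on the star-shaped set $U$, and it is precisely the relation used in the treatment of the de Rham theorem for simplicial complexes in \cite{ST}. I would establish it by a term-by-term computation: evaluate both sides on a monomial $k$-form $f\,dx_{i_1}\wedge\cdots\wedge dx_{i_k}$, differentiate $I^u_{k-1}(f)$ under the integral sign (permissible because $f$ is smooth and $U$ is bounded), and use the total-derivative identity $\frac{d}{dt}\big(t^{k}f(tx+(1-t)u)\big)=k\,t^{k-1}f(tx+(1-t)u)+t^{k}\sum_{i}(x_i-u_i)(\partial_i f)(tx+(1-t)u)$ to rewrite the resulting $t$-integrals via the fundamental theorem of calculus. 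After this substitution the two sides of the relation are seen to coincide. I expect this to be the main obstacle, and it is mostly bookkeeping: one has to track the alternating signs $(-1)^j$ and the hatted wedge factors $dx_{i_1}\wedge\cdots\widehat{dx_{i_j}}\cdots\wedge dx_{i_k}$ and check that the cross terms cancel correctly.

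Property (2) I would verify by a change of variables in the same formula. With $\t^x(z)=z-x$, the pullback ${\t^x}^*$ precomposes coefficient functions with $\t^x$ and leaves the $dx_i$ unchanged. The crux is that the straight-line homotopy is translation covariant, $t(z-x)+(1-t)(u-x)=\big(tz+(1-t)u\big)-x$, so that replacing the center $u$ by $u-x$ and evaluating at $z-x$ reproduces the original integrand up to the overall shift; moreover the prefactors are invariant, $(z-x)_{i_j}-(u-x)_{i_j}=z_{i_j}-u_{i_j}$. Matching the two formulas term by term then gives $h_k^{U-x,u-x}={\t^x}^*\circ h_k^{U,u}$. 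This exact (not merely finite-range) equivariance is what makes the forms $\alpha_l(c)$ pattern equivariant in Lemma~\ref{lem-al}.

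For property (3), continuity is meant with respect to the Fr\'echet topology defined by the seminorms $s_k$. Since $U$ is bounded, the prefactors $(x_i-u_i)$ and all their derivatives are bounded on $U$. Differentiating $h_k^{U,u}(\omega)$ under the integral sign gives, by the chain rule, derivatives of the coefficients of $\omega$ evaluated at $tx+(1-t)u$, each carrying a power of $t$ that is at most $1$ on $[0,1]$, multiplied by the bounded prefactors; integrating over $t\in[0,1]$ yields a bound of the form $s_m\big(h_k^{U,u}(\omega)\big)\le C_{U,m}\,s_m(\omega)$ with $C_{U,m}$ depending only on $U$ and $m$. This gives continuity, and the same estimate is what allows the whole construction to extend to the weak versions by continuous extension.
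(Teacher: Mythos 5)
Your proposal is correct and follows essentially the same route as the paper: the paper disposes of (1) by citing \cite{ST} for the Poincar\'e homotopy identity (which, as you rightly read it, is what the printed relation is intended to express), treats (2) as immediate from the definition via exactly your translation covariance $t(z-x)+(1-t)(u-x)=(tz+(1-t)u)-x$, and proves (3) by the same estimate you give, namely $\|D^\kappa I^u_{k-1}(f)\|_\infty\leq\|D^\kappa f\|_\infty$ combined with the observation that the prefactor form $\sum_j(-1)^j(x_{i_j}-u_{i_j})\,dx_{i_1}\wedge\cdots\widehat{dx_{i_j}}\cdots dx_{i_k}$ is independent of $f$ and has bounded seminorms because $U$ is bounded. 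The only difference is cosmetic: you write out the standard fundamental-theorem-of-calculus computation for (1) where the paper simply refers to \cite{ST}.
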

\pn\textbf{\emph{Proof.}}
$h_{k-1}^{U,u}$ is a standard homotopy for $U$ and the first statement
(Poincar\'e lemma for $U$)
proven for instance in \cite{ST}. The second statement is a direct
consequence of the definition.
As for the third, continuity of $h_{k-1}^{U,u}$ follows from continuity
of $I^u_{k-1}$ since the form
$\omega(x) = \sum_{j=1}^k(-1)^j (x_{i_j}-u_{i_j}) dx_{i_1}\wedge\cdots
\widehat{dx_{i_j}}\cdots dx_{i_k}$ does not depend on $f$ and has
bounded seminorms given that $U$ is bounded. Now
$$\| D^\kappa I^u_{k-1}(f)\|_\infty = \left\|\int_0^1 t^{k-1+|\kappa|}
  (D^\kappa f)(tx+(1-t)u)dt \right\|_\infty \leq \|D^\kappa f\|_\infty$$
shows that $I^u_{k-1}$ is indeed continuous.
\qed\bigskip

Let $\sigma$ be an $k$-simplex. 
We denote by $[\sigma]_\eps$ the $\epsilon$-neighborhood 
of $\im(\sigma)$ and  by $[\partial\sigma]_\eps$ 
the $\epsilon$-neighborhood of its boundary $\partial\im(\sigma)$. 
We denote by 
%$\A^r(U)$ all $r$-forms and by
$\ZZ^r_b(U)$ the closed bounded $r$-forms on $U$.  
For $r\neq k$, $r,k\geq 1$ let 
$$\D_\epsilon^r(\sigma) = \{(\omega,\tau)\in \ZZ_b^r([\sigma]_\eps)  \oplus \A_b^{r-1}([\partial \sigma]_\eps): \omega\left|_{[\partial \sigma]_\eps}\right. = d\tau\} $$
%$$W^r(\sigma) =  \F([\sigma]_\eps,\Lambda^r\rn^*)$$
and for $r=k\geq 1$ let 
$$\D_\epsilon^k(\sigma) = \{(\omega,\tau)\in \ZZ_b^k([\sigma]_\eps)  \oplus \A_b^{k-1}([\partial \sigma]_\eps): \omega\left|_{[\partial \sigma]_\eps}\right. = d\tau, \int_\sigma \omega = \int_{\partial \sigma}\tau\} .$$
\begin{lem}
There exists an $\eps>0$ and a module map 
$\beta^r(\sigma):\D_\epsilon^r(\sigma)\to \A_b^{r-1}([\sigma]_\eps)$ satisfying
\begin{enumerate}
\item $\tau' = \beta^r(\sigma)(\omega,\tau)$ coincides with $\tau$ on
  $[\partial \sigma]_\eps$, 
\item $d\tau' = \omega$,
\item If ${\t^x}(\sigma)$ is a simplex then $\beta^r({\t^x}(\sigma))=
  {\t^x}^*\circ\beta^r(\sigma)$, 
\item $\beta^r(\sigma)$ is continuous.
\end{enumerate}
\end{lem}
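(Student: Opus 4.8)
The plan is to prove a continuous, translation-covariant version of the relative Poincar\'e lemma: given the relative cocycle $(\omega,\tau)\in\D_\eps^r(\sigma)$, I would first produce a global primitive $\eta$ of $\omega$ on the whole set $[\sigma]_\eps$ and then correct it near the boundary so that the corrected form restricts to $\tau$ on the collar. Since $\im(\sigma)$ is convex, its $\eps$-neighborhood $[\sigma]_\eps$ is convex, hence star-shaped with respect to the barycenter $u=u(\sigma)$ for every $\eps$. I would therefore set $\eta:=h_{r-1}^{[\sigma]_\eps,u}(\omega)$. By the Poincar\'e (homotopy) property of Lemma~\ref{lem-h} and $r\geq1$ this gives $d\eta=\omega$, it is bounded because $[\sigma]_\eps$ is bounded, it is continuous, and it transforms covariantly because both $u$ and $h^{U,u}$ do (Lemma~\ref{lem-h}, part~2). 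All later choices (cutoffs depending only on the distance to $\partial\sigma$, the cover below, the centers) will likewise be made canonically from $\sigma$, so that the whole operator is translation covariant.

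Next consider the $(r-1)$-form $\psi:=\eta|_{[\partial\sigma]_\eps}-\tau$ on the collar $A:=[\partial\sigma]_\eps$. It is closed, since $d\eta|_A=\omega|_A=d\tau$. For $\eps$ small enough $A$ is a tubular neighborhood of $\partial\im(\sigma)$ and deformation retracts onto $\partial\sigma\cong S^{k-1}$, so $H^{r-1}(A)\cong H^{r-1}(S^{k-1})$, and the obstruction to solving $ds=\psi$ on $A$ lives in this group modulo the restriction of global classes from the contractible $[\sigma]_\eps$. I claim the defining conditions of $\D_\eps^r(\sigma)$ force this obstruction to vanish. For $r\neq k$ one has $r-1\neq k-1$, so $H^{r-1}(S^{k-1})$ can be nonzero only in the subcase $r=1$; then $k\geq2$, $A$ is connected, $\psi$ is a constant, and this constant is absorbed by replacing $\eta$ with $\eta$ minus it. For $r=k$ the obstruction is the single number $\int_{\partial\sigma}\psi=\int_{\partial\sigma}\eta-\int_{\partial\sigma}\tau$, which equals $\int_\sigma d\eta-\int_{\partial\sigma}\tau=\int_\sigma\omega-\int_{\partial\sigma}\tau=0$ by Stokes and precisely the extra integral condition built into $\D_\eps^k(\sigma)$.

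The technical heart is then to solve $ds=\psi$ by a \emph{continuous and translation-covariant} operator $s=k_A(\psi)$ on the collar. Because $A$ is homotopy equivalent to a sphere it is not star-shaped, so a single operator $h^{A,u}$ is unavailable; instead I would cover $A$ by finitely many convex (hence star-shaped) open pieces $U_i$ adapted to the facets of $\sigma$, fix a subordinate partition of unity and centers $u_i$ chosen canonically from the geometry of $\sigma$, and assemble the operators $h^{U_i,u_i}$ of Lemma~\ref{lem-h} into a primitive operator by the usual Mayer--Vietoris bookkeeping. On $\D_\eps^r(\sigma)$ the class $[\psi]$ vanishes, so this returns a genuine primitive, $ds=\psi$; continuity follows from continuity of each $h^{U_i,u_i}$ together with boundedness of the fixed partition of unity, and covariance follows from the canonical choices. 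Producing these explicit, continuous, covariant homotopy operators on the non-contractible collar is the main obstacle, and is where the index bookkeeping and the smallness of $\eps$ are really used.

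Finally I would glue. Choosing a cutoff $\chi$ equal to $1$ on a smaller collar $[\partial\sigma]_{\eps'}$ and supported in $A$, depending only on the distance to $\partial\sigma$, I set
$$\beta^r(\sigma)(\omega,\tau):=\eta-d(\chi\,s),$$
where $\chi s$ is extended by zero to all of $[\sigma]_\eps$. Then $d\beta^r(\sigma)(\omega,\tau)=d\eta=\omega$, giving property~(2), and on $[\partial\sigma]_{\eps'}$, where $\chi\equiv1$, one gets $\eta-ds=\eta-\psi=\tau$, giving property~(1) on that collar (in the sequel only agreement near $\partial\sigma$ is needed, so one reads the statement with this $\eps'$). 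For $r=1$ the formula degenerates to $\beta^1(\sigma)(\omega,\tau)=\eta-c$ with $c$ the constant value of $\psi$, which is immediate. The map is linear in $(\omega,\tau)$ by construction, hence a module map, and properties~(3) and~(4) are inherited: $\beta^r(\sigma)$ is a finite composition of the continuous, translation-covariant operators $\eta\mapsto h^{[\sigma]_\eps,u}$, $k_A$, multiplication by the fixed $\chi$, and $d$, so it is continuous and satisfies $\beta^r(\t^x(\sigma))={\t^x}^*\circ\beta^r(\sigma)$.
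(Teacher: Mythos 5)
Your global skeleton is in fact the paper's: the primitive $\eta=h^{[\sigma]_\epsilon,b_\sigma}(\omega)$ with $b_\sigma$ the barycenter, the closed discrepancy $\psi=\eta|_{[\partial\sigma]_\epsilon}-\tau$, the obstruction bookkeeping ($r=1$: constants; $r=k$: $\int_{\partial\sigma}\psi=\int_\sigma\omega-\int_{\partial\sigma}\tau=0$ by Stokes, which is exactly the integral condition built into the paper's auxiliary domains for its extension operators), and even your final gluing: the paper's correction term is $\gamma^{r-1}(\sigma)(\psi)$, and $\gamma^{r-1}(\sigma)$ is itself defined as $d(b_\sigma\mu_\sigma)$ with $d\mu_\sigma=\psi$ on the collar --- literally your $d(\chi\, s)$. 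But the one step you defer, producing the continuous, translation-covariant solving operator $s=k_A(\psi)$ on the sphere-like collar, is the entire technical content of the lemma, and ``the usual Mayer--Vietoris bookkeeping'' does not discharge it as stated. A cover of the collar by convex pieces ``adapted to the facets'' has overlaps which are themselves collars of lower-dimensional sphere-like boundaries, so the assembly reproduces the original problem one dimension down rather than reducing it to star-shaped data; to run instead a generic finite convex cover you would need the full \v{C}ech--de Rham double complex together with linear splittings at the level of the nerve, chosen covariantly and checked to be continuous in all the seminorms $s_k$ --- machinery the paper explicitly set out to avoid, and which your sketch neither constructs nor verifies. You yourself flag this as ``the main obstacle,'' which is an accurate self-diagnosis: without it the proposal records the right reductions but not a proof.

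The paper closes precisely this hole with a double induction that is missing from your outline. It introduces extension operators $\gamma^r(\sigma)$, taking a closed $r$-form on $[\partial\sigma]_\epsilon$ (subject to the vanishing-integral condition in the critical degree) to a closed $r$-form on $[\sigma]_\epsilon$ agreeing with it on $[\partial\sigma]_{\epsilon/2}$, with base case $\gamma^0$ given by constant extension. Then $\beta^r(\sigma)(\omega,\tau)=h^\sigma(\omega)-\gamma^{r-1}(\sigma)\bigl(h^\sigma(\omega)|_{[\partial\sigma]_\epsilon}-\tau\bigr)$, while $\gamma^r(\sigma)$ is built from the key geometric observation that $[\partial\sigma\setminus\sigma']_\epsilon$ is star-shaped with center the vertex $v$ opposite the face $\sigma'$, so $h^{[\partial\sigma\setminus\sigma']_\epsilon,v}$ gives a primitive there, and the matching across $[\sigma']_\epsilon$ is achieved by the already-constructed relative operator $\beta^r(\sigma')$ of the lower-dimensional simplex. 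This opposite-vertex star-shape trick plus the $\beta\leftrightarrow\gamma$ recursion is exactly the ``index bookkeeping'' you postpone; if you want to salvage your route, either adopt this recursion or genuinely carry out the \v{C}ech-theoretic construction of $k_A$ with covariant continuous splittings. One point of your reading is consistent with the paper: its operators also only reproduce $\tau$ on the half-width collar, so property (1) is indeed to be understood with a shrunken collar, as in your remark about $\epsilon'$.
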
  
\pn\textbf{\emph{Proof.}}
\newcommand{\Dd}{\E}
The proof is by induction. For that we need to consider a second
module map.
For $r\neq k$, $r\geq 0$, $k\geq 1$ let 
$\Dd_\eps^r(\sigma) =  \ZZ_b^r([\partial \sigma]_\eps$
and for $r=k-1\geq 0$ let 
$$\Dd_\eps^k(\sigma) = \{\omega\in \ZZ_b^k([\partial \sigma]_\eps : 
\int_{\partial \sigma} \omega = 0 \} .$$
We aim to show that there exists a module map
$\gamma^r(\sigma):\Dd_\eps^r(\sigma)\to \ZZ_b^{r}([\sigma]_\eps)$ which
satisfies 
\begin{enumerate}
\item $\gamma^r(\sigma)(\omega)$ coincides with $\omega$ on $[\partial
  \sigma]_\frac{\eps}2$, 
%\item $d\tau' = \omega$,
\item If ${\t^x}(\sigma)$ is a simplex then $\gamma^r({\t^x}(\sigma))=
  {\t^x}^*\circ\gamma^r(\sigma)$, 
\item $\gamma^r(\sigma)$ is continuous.
\end{enumerate}
\begin{itemize}
\item[$\gamma^0(\sigma)$] Let $\omega\in \Dd_\eps^0(\sigma)$. Since
  $d\omega = 0$, $\omega$ is constant on any connected subset of
  $\partial \sigma$. If $k\neq 1$  $\partial \sigma$ is connected and
  hence $\omega$ constant. 
If $k=1$ the extra condition  $\int_\sigma \omega = 0$ implies that
$\omega$ is constant. Thus for all choices of $k$   
we can define $\gamma^0(\sigma)(\omega)$ to be the 
constant extension of $\omega$ to $\sigma$. The properties stated are
obviously satisfied. 
\item[$\beta^r(\sigma)$] We assume the existence of
  $\gamma^{r-1}(\sigma)$ satisfying the above properties. This is
  certainly correct for $r=1$ and then will follow inductively in view
  of the next step. Let $(\omega,\tau)\in \D_\eps^r(\sigma)$. Let
  $b_\sigma$ be the barycenter of $\sigma$ and
  $h^\sigma=h^{[\sigma]_\eps,b_\sigma}$. Define 
$$ \beta^r(\sigma)(\omega,\tau) := h^\sigma(\omega)
-\gamma^{r-1}(\sigma)(h^\sigma(\omega)\left|_{[\partial \sigma]_\eps}
\right.-\tau)  .$$ 
The first two properties are verified in \cite{ST}. The other two
follow immediately from the properties of  
$\gamma^{r-1}(\sigma)$ and Lemma~\ref{lem-h}.
\item[$\gamma^r(\sigma)$] We assume the existence of
  $\beta^{r}(\sigma)$ satisfying the properties stated in the lemma. 
This follows inductively in view of the preceding step.
Let $\omega\in \Dd_\eps^r(\sigma)$. 
Let $v$ be the first vertex of $\sigma$ (w.r.t.\ the ordering of the
vertices) and $\sigma' $ the corresponding $k-1$-simplex
of the boundary of $\sigma$, i.e.\ the face which does not contain $v$.
Then $[\partial \sigma \backslash
\sigma']_\eps$ is star-shaped with center $v$. 
We let $\mu'_\sigma=h^{[\partial \sigma \backslash
\sigma']_\eps,v}(\omega\left|_{[\partial \sigma \backslash
\sigma']_\eps]}\right.)$ and $\mu''_\sigma =
\beta^r(\sigma)(\omega\left|_{[\sigma']_\eps}\right. , \mu_0\left|_{[\partial
    \sigma']_\eps}\right.)$. 
$\mu'_\sigma$ and $\mu''_\sigma$ coincide on their common domain
and hence define 
an $r-1$-form $\mu_\sigma$ on $[\partial \sigma]_\eps$. 
It follows from Lemmata~\ref{lem-simpl} and \ref{lem-h} and the induction
hypothesis for $\beta^r$ 
that $\mu_{\t^x(\sigma)}=
  {\t^x}^*\mu_\sigma$ provided $\t^x(\sigma)$ is a simplex of the tiling. 
Now consider a family of smooth functions 
$b_\sigma:[\sigma]_\eps\to \R$, defined for all $k$-simplices, 
which is s-PE in the sense that $b_{\t^x(\sigma)} = {\t^x}^*b_\sigma$. 
We suppose that each $b_\sigma$ is identically $1$ on
$[\partial\sigma]_\frac\eps2$ and identically $0$ on the complement of
$[\partial\sigma]_\eps$. Then 
$$ \gamma^r(\sigma)(\omega) := d(b_\sigma \mu_\sigma)$$
satisfies $\gamma^r(\sigma)(\omega)=\omega$ on 
$[\partial\sigma]_\frac\eps2$, as is shown in \cite{ST}, and
$\gamma^r({\t^x}(\sigma))= {\t^x}^*\circ\gamma^r(\sigma)$.
Continuity of  $ \gamma^r(\sigma)$ follows from the continuity of 
$h^{[\partial \sigma \backslash\sigma']_\eps,v}$, $\beta^r(\sigma)$
and $d$, and smoothness of $b_\sigma$. 
\end{itemize}
\qed\bigskip

Let $\D_\epsilon^r(\p^{(k)})$ be defined as
$\D_\epsilon^r(\sigma)$ except replacing the 
simplex by the $k$-skeleton. If $r=k$ we demand the integral equation
$\int_\sigma \omega = \int_{\partial \sigma}\tau$ to hold for all
$k$-simplices. 
We then define 
$\beta^r:\D_\epsilon^r(\p^{(k)})\to \A_b^{r-1}([\p^{k)}]_\eps)$ by glueing the
$\beta^r(\sigma)(\omega\left|_{[\sigma]_\eps}\right.,\tau\left|_{[\partial
    s]_\eps}\right.)$ defined for the $k$-simplices $\sigma$ of $\p^{(k)}$
together which is possible as they agree with
$(\omega,\tau)$ along the $\epsilon$-neighborhood of $\p^{(k-1)}$.  
\begin{cor}\label{cor-3.7}
\begin{enumerate}
\item If $(\omega,\tau)\in \D_\epsilon^r(\p^{(k)})$ is s-PE then
  $\beta^r(\omega,\tau)$ is s-PE. 
\item If $(\omega,\tau)\in \D_\epsilon^r(\p^{(k)})$ is w-PE then
  $\beta^r(\omega,\tau)$ is w-PE. 
\end{enumerate}
\end{cor}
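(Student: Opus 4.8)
The plan is to lift the two local properties of the maps $\beta^r(\sigma)$ established in the preceding lemma --- the equivariance relation $\beta^r(\t^x(\sigma))={\t^x}^*\circ\beta^r(\sigma)$ and the continuity of each $\beta^r(\sigma)$ --- to the glued map $\beta^r$. I would prove the s-PE statement (1) directly and then deduce the w-PE statement (2) from it by a continuity argument, exactly in the spirit of the analogous w-PE deductions in Lemma~\ref{lem-J} and Lemma~\ref{lem-al}.

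For item (1) the essential observation is that $\beta^r(\sigma)(\omega,\tau)$ is determined by, and supported on, the bounded sets $[\sigma]_\eps$ and $[\partial\sigma]_\eps$: this is clear from the construction, since $h^\sigma$ and the auxiliary maps $\gamma^{r-1}(\sigma)$ only integrate along rays that stay inside these neighborhoods. Assume $(\omega,\tau)$ is s-PE, pattern equivariant with range $R_0$, and let $R$ exceed $R_0$ plus the (uniformly bounded) diameter of $[\sigma]_\eps$. Taking $z,z'$ with $B_R[\t^z(\p)]=B_R[\t^{z'}(\p)]$ and $v=z'-z$, translation by $v$ carries the simplex $\sigma$ with $z\in[\sigma]_\eps$ to the simplex $\sigma'$ with $z'\in[\sigma']_\eps$, and, by the choice of $R$ and the s-PE hypothesis, carries the restricted data of $(\omega,\tau)$ near $\sigma$ to that near $\sigma'$. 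The translation-equivariance $\beta^r(\t^x(\sigma))={\t^x}^*\circ\beta^r(\sigma)$ then shows that translating the input by $v$ translates the output by $v$, so that $\beta^r(\omega,\tau)$ agrees at $z$ and $z'$; consistency on the overlaps, where several $[\sigma]_\eps$ meet near $\p^{(k-1)}$, is unproblematic because there all the pieces coincide with $\tau$, which is itself s-PE. Hence $\beta^r(\omega,\tau)$ is s-PE, precisely the analogue of the relation $\alpha_l(1_\sigma)={\t^x}^*\alpha_l(1_{\sigma'})$ exploited in Lemma~\ref{lem-al}.

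For item (2) I would first record that the glued map $\beta^r$ is continuous in the Fr\'echet topology. Since $\beta^r(\omega,\tau)$ agrees on each $[\sigma]_\eps$ with $\beta^r(\sigma)$ applied to the restrictions of $(\omega,\tau)$, and since finite local complexity forces these pieces to fall into only finitely many translation types --- so that the continuity constants coming from continuity of the individual $\beta^r(\sigma)$ are uniform --- while each point of $\rn$ lies in only boundedly many $[\sigma]_\eps$, the seminorms $s_m(\beta^r(\omega,\tau))$ are controlled by those of $(\omega,\tau)$. This is the same bookkeeping as in the continuity estimate of Lemma~\ref{lem-al}. Granting continuity and item (1), item (2) follows from the fact that w-PE is by definition the closure of s-PE: a continuous map sends the closure of the s-PE pairs into the closure of their images, and those images are s-PE by item (1), so $\beta^r$ sends w-PE pairs to w-PE forms.

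The main obstacle is the uniform continuity of the glued map $\beta^r$: unlike the single-simplex case it assembles infinitely many pieces, and the estimate closes only because finite local complexity collapses them to finitely many translation types with locally finite overlaps. A secondary point to state carefully is that the closure argument for item (2) requires the s-PE elements to be dense among the w-PE elements \emph{within} the constrained domain $\D_\eps^r(\p^{(k)})$; this is inherited from the definition of w-PE together with the fact that the defining conditions $\omega|_{[\partial\sigma]_\eps}=d\tau$ and $\int_\sigma\omega=\int_{\partial\sigma}\tau$ are closed.
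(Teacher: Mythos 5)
Your proposal is correct and takes essentially the same route as the paper, which in fact states this corollary without any proof at all: it is meant to follow immediately from the translation-equivariance property $\beta^r(\t^x(\sigma))={\t^x}^*\circ\beta^r(\sigma)$ and the continuity of the maps $\beta^r(\sigma)$ from the preceding lemma, combined with finite local complexity to control the glueing --- exactly the two pillars of your argument. Your writeup simply makes explicit what the paper leaves implicit, and your closing remark about density of constrained s-PE pairs inside $\D_\eps^r(\p^{(k)})$ flags a subtlety the paper itself glosses over.
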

\begin{lem} Let $\omega\in\ZZ^l_b(\rn)$, $l\geq 1$, and $J_l(\omega) = d_S c$. 
Then there exists $\tau\in \A_b^{l-1}(\rn)$ such that $\omega = d\tau$.
Moreover,
\begin{enumerate}
\item If $\omega$ and $c$ are s-PE then  $\tau$ can be chosen s-PE.
\item If $\omega$ and $c$ are w-PE then  $\tau$ can be chosen w-PE.
\end{enumerate}
\end{lem}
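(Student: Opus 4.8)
The plan is to realise $\tau$ as a bounded primitive of $\omega$, obtained by extending a primitive outward over the skeleta $\p^{(l-1)}\subset\p^{(l)}\subset\cdots\subset\p^{(d)}=\rn$ with the operators $\beta^l$. To isolate the role of $c$ I would first replace $\omega$ by $\mu:=\omega-d\,\alpha_{l-1}(c)$. By Lemmata~\ref{lem-al} and \ref{lem-J} this form is still closed and bounded and has vanishing period cochain, $J_l(\mu)=J_l(\omega)-d_S J_{l-1}\alpha_{l-1}(c)=d_Sc-d_Sc=0$; it is moreover s-PE (resp.\ w-PE) whenever $\omega$ and $c$ are. Since $\tau:=\alpha_{l-1}(c)+\tau_0$ satisfies $d\tau=\omega$ as soon as $d\tau_0=\mu$, it suffices to produce a bounded, suitably pattern equivariant $\tau_0$ with $d\tau_0=\mu$ out of a closed bounded $l$-form $\mu$ all of whose periods vanish.

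For this I would run the inductive extension. Iterating $\beta^l$ over $\D_\epsilon^l(\p^{(k)})$ for $k=0,\dots,l-1$ produces a bounded $\tau_1$ on $[\p^{(l-1)}]_\eps$ with $d\tau_1=\mu$; at each of these stages $r=l>k$, so no integral side condition enters and the only input needed is the compatibility $\mu|_{[\partial\sigma]_\eps}=d\tau_1$ inherited from the previous stage. Once the primitive reaches the $l$-skeleton I would cross it by applying $\beta^l$ to the pair $(\mu,\tau_1)$ over $\D_\epsilon^l(\p^{(l)})$. The remaining extensions over $k=l+1,\dots,d$ again carry no side condition ($r=l<k$), and at $k=d$ one lands in $\A_b^{l-1}(\rn)$ with $d\tau_0=\mu$.

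The one stage that is not automatic, and which I expect to be the main obstacle, is the crossing of the $l$-skeleton: here $r=l=k$, so membership of $(\mu,\tau_1)$ in $\D_\epsilon^l(\p^{(l)})$ requires the integral condition $\int_\sigma\mu=\int_{\partial\sigma}\tau_1$ for every $l$-simplex $\sigma$. Since $\int_\sigma\mu=0$ and $\int_{\partial\sigma}\tau_1=(d_S J_{l-1}\tau_1)(\langle\sigma\rangle)$, what is needed is that $J_{l-1}\tau_1$ be a simplicial cocycle, which the freely built $\tau_1$ will not be. I would repair this by replacing $\tau_1$ with $\tau_1+\eta$, where $\eta$ is a bounded \emph{closed} $(l-1)$-form on $[\p^{(l-1)}]_\eps$ realising the prescribed boundary periods $\int_{\partial\sigma}\eta=-\int_{\partial\sigma}\tau_1$ for all $\sigma$, i.e.\ $d_S(J_{l-1}\eta)=-d_S(J_{l-1}\tau_1)$. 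Adding a closed form keeps $d(\tau_1+\eta)=\mu$, and the defect $d_S(J_{l-1}\tau_1)$ being a coboundary is exactly what makes such an $\eta$ available; concretely I would build it by realising the relevant class by a closed bounded form near the $(l-1)$-skeleton and then correcting it by terms of the form $d\,\alpha_{l-2}(\cdot)$, which shift periods by a prescribed coboundary without destroying closedness. The delicate point is not the bare existence of $\eta$ but the requirement that $\eta$ be simultaneously bounded \emph{and} pattern equivariant; this is where the continuity and translation compatibility of $\alpha$, of $\beta^l$ and of the homotopies $h$ recorded in Lemmata~\ref{lem-al}, \ref{lem-h} and Corollary~\ref{cor-3.7} have to be used, and it is the part of the argument that demands the most care.

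Finally I would propagate the equivariance. If $\omega$ and $c$ are s-PE then $\mu$ is s-PE, each $\beta^l(\p^{(k)})$ preserves s-PE by Corollary~\ref{cor-3.7}, $\alpha_{l-1}(c)$ is s-PE by Lemma~\ref{lem-al}, and $\eta$ is s-PE by the construction above, so $\tau=\alpha_{l-1}(c)+\tau_0$ is s-PE, which proves~(1). For~(2) I would observe that all the operators entering the construction ($\alpha_\bullet$, $\beta^l$, the homotopies $h$, and $d$, $J_\bullet$) are continuous for the relevant Fr\'echet and sup-norm topologies, so the whole procedure commutes with taking limits of s-PE data; hence it carries w-PE inputs to w-PE outputs and $\tau$ can be chosen w-PE whenever $\omega$ and $c$ are.
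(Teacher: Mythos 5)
Your overall architecture matches the paper's: the paper also builds $\tau$ by skeleton-wise extension, $\tau=\tilde\beta_n\circ\cdots\circ\tilde\beta_0(\omega)$, starting from the homotopies $h^{[v]_\epsilon}$ at the vertices, crossing each skeleton with the glued maps $\beta^l$, inserting a period correction exactly once, at the stage $k=l-1$ just before the only constrained crossing ($r=k=l$), and obtaining the s-PE and w-PE statements, as you do, by finite iteration of Lemmata~\ref{lem-J}, \ref{lem-al}, \ref{lem-h} and Cor.~\ref{cor-3.7} together with continuity of all the operators. Your preliminary normalization $\mu=\omega-d\,\alpha_{l-1}(c)$, which reduces to vanishing period cochain, is a harmless cosmetic variant: the paper instead carries $c$ through, setting $\tilde\beta_{l-1}(\omega)=\beta^l(\omega,\tilde\beta_{l-2}(\omega))-\alpha_{l-1}\bigl(J_{l-1}(\beta^l(\omega,\tilde\beta_{l-2}(\omega)))-c\bigr)$ (and, for $l=1$, normalizing the primitive at each vertex to $c(v)$ at stage $0$), so that the primitive reaching the $(l-1)$-skeleton has period cochain exactly $c$ and the integral condition in $\D_\epsilon^l(\p^{(l)})$ becomes $\int_{\partial\sigma}\tau=d_Sc(\langle\sigma\rangle)=\int_\sigma\omega$.

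However, at the very step you single out as the main obstacle, your mechanism fails. You want a closed bounded $(l-1)$-form $\eta$ on $[\p^{(l-1)}]_\epsilon$ with $\int_{\partial\sigma}\eta=-\int_{\partial\sigma}\tau_1$ for all $l$-simplices $\sigma$, and you propose to obtain it by ``realising the relevant class'' and then adjusting by exact forms $d\,\alpha_{l-2}(b)$. But $\partial\sigma$ is a cycle, so $\int_{\partial\sigma}d\,\alpha_{l-2}(b)=d_S(d_Sb)(\langle\sigma\rangle)=0$ for every $b$: exact corrections shift the $J_{l-1}$-periods by the coboundary $d_Sb$ but leave the boundary-periods $\int_{\partial\sigma}(\cdot)$ --- the quantity that must be corrected --- completely unchanged; moreover $-J_{l-1}\tau_1$ is not a cocycle, so there is no class to realise (and the defect $d_S(J_{l-1}\tau_1)$ is a coboundary trivially, which carries no information). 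What actually makes the correction available, and what the paper (following \cite{ST}) uses implicitly, is a \emph{support property} of the Whitney map: each form $\alpha_l(1_\sigma)=\omega_\sigma$ vanishes on a uniform neighborhood of the $(l-1)$-skeleton, because a point close to a simplex of dimension $\leq l-1$ lies within the cutoff margin $\frac{1}{n+2}$ of the complement of $St(v_j)$ for at least one vertex $v_j$ of $\sigma$, hence in $G_{v_j}$, where $g_{v_j}$ and $dg_{v_j}$ vanish and kill every summand of $\omega_\sigma$. Consequently $\eta:=-\alpha_{l-1}(J_{l-1}\tau_1)$ restricted to $[\p^{(l-1)}]_\epsilon$ (with $\epsilon$ small) has exactly the required periods, since $J_{l-1}\circ\alpha_{l-1}=\mathrm{id}$, and satisfies $d\eta=-\alpha_l(d_SJ_{l-1}\tau_1)=0$ there, while being bounded and s-PE, respectively w-PE, by Lemma~\ref{lem-al}; this is precisely the correction term the paper inserts (with $c$ in place of $0$ in its un-normalized setting). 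Without this support argument, or some substitute for it, your construction of $\eta$ does not go through, and since everything else in your outline is routine, this is a genuine gap at the decisive step of the proof.
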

\pn\textbf{\emph{Proof.}}
We follow again \cite{ST} constructing $\tau$ by a composition of
module maps
$$\tau = \tilde\beta_n\circ\cdots \circ\tilde\beta_0(\omega).$$
$\tilde\beta_0(\omega)$ is an $l-1$-form on the $\eps$-neighborhood
of $\p^{(0)}$. We may therefore define it through its restrictions to the
connected components of $[\p^{(0)}]_\eps$, 
namely the restriction to $[v]\epsilon$ is given by 
$$h^{[v]\epsilon}(\omega)  - \delta_{l
  1}(h^{[v]\epsilon}(\omega)(v)-c(v)).$$ 
If $k>0$ then 
$$\tilde\beta_{k}(\omega)=\beta^l(\omega,\tilde\beta_{k-1}(\omega))
- \delta_{l-1\,
  k}\alpha_{l-1}(J_{l-1}(\beta^l(\omega,\tilde\beta_{k-1}(\omega))-c).$$ 
This construction of $\tau$ follows exactly the lines of the proof of
\cite{ST} and so the proof given there for the first statement applies
also in our context.  
The two remaining statements are now obtained by finite iteration of the results of Lemmata~\ref{lem-J}, \ref{lem-al}, \ref{lem-h} and Cor.~\ref{cor-3.7}. 
\qed\bigskip

We thus have proved Theorem~\ref{thm-deRham}.

\begin{cor}
The map $\iso= \PEPV\circ J: \A_{w-\p}(\R^d)\to \CC(\Xi,\R)$ 
induces an isomorphism $H(\iso)$ between $H_{w-\p}(\R^d)$ and
$H_{PV}(\Gamma_0,\CC(\Xi,\R))$ which
restricts to an isomorphism between $H_{s-\p}(\R^d)$ and
$H_{PV}(\Gamma_0,\CC_{lc}(\Xi,\R))$. 

Let $\omega\in\A_{w-\p}^n(\R^d)$. An explicit formula for
$\iso(\omega) \in \CC(\Xi^n,\R)$  
%on the orbit of $\p$ and there 
is given by
$$ \iso(\omega)( \p - p ) = \int_{\sigma_{p}}\omega, $$
where $\sigma_{p}$ is the simplex of $\p$ whose puncture is on
$p\in\p^{n,punc}$. \end{cor}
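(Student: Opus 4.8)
The plan is to deduce the corollary by composing the two isomorphisms already in hand, the PE de Rham isomorphism carried by $J$ and the isomorphism $\PEPV$ between PE simplicial and PV cochains, and then to read off the stated formula by substituting one into the other. First I would check that $\iso=\PEPV\circ J$ is a well-defined chain map with the correct source and target in both the strong and the weak setting. On the one side, $J$ is a chain map by Lemma~\ref{lem-J}(1), it is continuous by Lemma~\ref{lem-J}(2), and it sends s-PE forms to s-PE cochains and w-PE forms to w-PE cochains by Lemma~\ref{lem-J}(3)--(4); hence it defines chain maps $J:\A_{s-\p}(\R^d)\to C_{s-\p}(\p,\R)$ and $J:\A_{w-\p}(\R^d)\to C_{w-\p}(\p,\R)$, the latter restricting to the former. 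On the other side, in its original form $\PEPV$ is a chain isomorphism $C_{s-\p}(\p,\R)\to\CC_{lc}(\Xi,\R)$, and by the isometric extension of Lemma~\ref{cont} it becomes a chain isomorphism $C_{w-\p}(\p,\R)\to\CC(\Xi,\R)$ restricting to it. Composing yields the chain map $\iso:\A_{w-\p}(\R^d)\to\CC(\Xi,\R)$ whose restriction to the strong subcomplex is the chain map $\A_{s-\p}(\R^d)\to\CC_{lc}(\Xi,\R)$.

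The cohomology statement then follows purely formally. Theorem~\ref{thm-deRham} gives that $J$ induces isomorphisms $H_{s-\p}(\R^d)\cong H_{s-\p}(\p,\R)$ and $H_{w-\p}(\R^d)\cong H_{w-\p}(\p,\R)$, while Corollary~\ref{cor-PEPV} together with the discussion of the strong case gives that $\PEPV$ induces isomorphisms $H_{s-\p}(\p,\R)\cong H_{PV}(\Gamma_0,\CC_{lc}(\Xi,\R))$ and $H_{w-\p}(\p,\R)\cong H_{PV}(\Gamma_0,\CC(\Xi,\R))$. Since $H(\iso)=H(\PEPV)\circ H(J)$ is a composite of isomorphisms, it is an isomorphism in each of the two settings; and because every map involved commutes with passage from the weak to the strong subcomplex, the weak isomorphism restricts to the strong one, as claimed.

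For the explicit formula I would substitute $c=J(\omega)$ into the formula (\ref{eq-PEPV}) describing $\PEPV$ on the dense orbit through $\p$, which gives
$$\iso(\omega)(\p-p)=\PEPV(J(\omega))(\p-p)=J(\omega)(\langle\sigma_p\rangle)=\int_{\sigma_p}\omega,$$
with $\sigma_p$ the simplex of $\p$ whose puncture lies on $p\in\p^{n,punc}$. Since $\iso(\omega)\in\CC(\Xi^n,\R)$ is continuous and the orbit $\{\p-p\}$ is dense in $\Xi^n$, this value on the dense orbit pins $\iso(\omega)$ down on all of $\Xi^n$. I do not expect a genuine obstacle: the substantive work sits in Theorem~\ref{thm-deRham} and Corollary~\ref{cor-PEPV}, and the only points needing care are the bookkeeping ones, namely matching the domains and codomains of $J$ and $\PEPV$ under restriction to the strong subcomplexes and confirming that specifying $\iso(\omega)$ on the dense orbit determines the continuous function on the whole transversal.
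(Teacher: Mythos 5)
Your proposal is correct and follows essentially the same route as the paper, whose entire proof is ``Combine Cor.~\ref{cor-PEPV} with Thm.~\ref{thm-deRham} and Eq.~\ref{eq-PEPV}'': you compose the same two isomorphisms and obtain the explicit formula by the same substitution into (\ref{eq-PEPV}). Your additional bookkeeping (chain-map checks, restriction to the strong subcomplexes, and the density argument pinning down $\iso(\omega)$ on all of $\Xi^n$) merely makes explicit what the paper leaves implicit.
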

\pn\textbf{\emph{Proof.}} Combine Cor.~\ref{cor-PEPV} with
Thm.~\ref{thm-deRham}  and Eq.~\ref{eq-PEPV}.
\qed

%%%%%%%%%%%%%%%%%%%%%%%%%%%%%%%%%%%%%%%%%%%%
\section{Some applications}
%%%%%%%%%%%%%%%%%%%%%%%%%%%%%%%%%%%%%%%%%%%%%%%%%%%%%

\subsection{Weak cohomology of cut \& project tilings with dimension and codimension equal to one}
We saw that the weak PE de Rham cohomology, the weak PE simplicial
cohomology with real values and the weak PV cohomology with real
values are all isomorphic. We refer to this cohomology simply as the
weak tiling cohomology. The following calculation shows that this
cohomology is infinitely generated even for the simplest aperiodic tilings of
finite local complexity. Our calculation is based on an old result, namely  
 that the transversally {\em continuous} tangential cohomology of a Kronecker
foliation on a torus is infinitely generated in degree $1$ \cite{Mostow76}. 
We expect that weak  tiling cohomology is always infinitely generated
 for aperiodic tilings of finite local complexity. 

As explained in \cite{BS} for one dimensional tilings PV-cohomology is just the
cohomology of the group $\Z$ with coefficients in $\CC(\Xi^{0},\Z)$
where the action is induced by shifting the tiling by a tile (this is
more precisely the first return map into the $0$-dimensional 
$\Delta$-transversal
$\Xi^{0}$ of the $\R$-action by translation of the tilings). 
The same argument applies to weak PV-cohomology upon replacing
$\CC(\Xi^{0},\Z)$ by $\CC(\Xi^{0},\R)$. 

For cut \& project tilings with dimension and codimension equal to one 
%($2\to 1$ projection tilings) 
the dynamical system $(\Xi^{0},\Z)$ 
can be described as follows \cite{FHKmem}: 
The  cut \& project tiling is determined by two data, $\theta$ and $K$.
$\theta$ is an irrational number defining
the slope of a line $E\subset \R^2$ w.r.t.\ a choice of orthonormal
base. $K$ is a subset of the orthocomplement of $E$ which
is supposed to be compact and the closure of its interior. 
These data define a one dimensional tiling 
of $E\cong\R$. Indeed the tiles are intervals whose end points are 
given by the 
ortho-projection onto $E$ of all points in $\Z^2\cap (K+E)$ ($\Z^2$ is
the lattice generated by the base and it is supposed that no point of
that intersection lies on the boundary of $K+E$). Since $\theta$ is
irrational the tiling is aperiodic. 

On the $0$-dimensional $\Delta$-transversal of this tiling $\Xi^0$
the group $\Z$ acts via the first return map 
%$\varphi$ into the canonical transversal $\Xi^{0}$ 
of the $\R$-action by translation of the tilings.  
This dynamical system $(\Xi^{0},\Z)$ factors onto     
the dynamical system $(S^1=\R/\Z,\Z)$  given by the rotation around $\theta$.
The factor map $\mu:\Xi^{0}\to S^1$ is almost one-to-one. The points
where it is not one-to-one, the preimages of so-called cut points, 
form a union of orbits under the $\Z$-action. 
On these orbits $\mu$ is two-to-one  
and we denote such points by $a=(a^+,a^-)$.
\begin{thm} The first weak tiling
cohomology of a cut \& project tiling with dimension and codimension equal to one is infinitely generated.\end{thm}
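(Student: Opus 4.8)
The plan is to reduce the computation of the weak tiling cohomology to the transversally continuous tangential cohomology of the Kronecker foliation on the two-torus, which by Mostow's result \cite{Mostow76} is infinitely generated in degree $1$. By the isomorphisms established in the previous sections (Corollary following Theorem~\ref{thm-deRham}), the first weak tiling cohomology equals the weak PV-cohomology $H^1_{PV}(\Gamma_0,\CC(\Xi,\R))$, which for one-dimensional tilings is the group cohomology of $\Z$ with coefficients in the module $\CC(\Xi^0,\R)$, where $\Z$ acts by the first return map. Concretely, this $H^1$ is the cokernel of the map $T-\mathrm{id}$ on $\CC(\Xi^0,\R)$, where $T$ denotes the induced action on functions; so the goal becomes showing that $\CC(\Xi^0,\R)/\mathrm{im}(T-\mathrm{id})$ is infinite-dimensional over $\R$.

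The key tool is the factor map $\mu:\Xi^0\to S^1=\R/\Z$ intertwining the $\Z$-action on $\Xi^0$ with the rotation $R_\theta$ on the circle. First I would pull back continuous functions along $\mu$: the map $\mu^*:\CC(S^1,\R)\to\CC(\Xi^0,\R)$ is an injective $\Z$-equivariant homomorphism, and it induces a map on the $H^1$'s, i.e.\ on the cokernels of $R_\theta^*-\mathrm{id}$ and $T-\mathrm{id}$. The $H^1$ of the rotation system, namely $\CC(S^1,\R)/\mathrm{im}(R_\theta^*-\mathrm{id})$, is exactly the transversally continuous tangential $H^1$ of the Kronecker foliation, which Mostow proved infinitely generated. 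Thus the heart of the argument is to show that $\mu^*$ does not collapse this infinite-dimensional cokernel down to something finite-dimensional — i.e.\ that the induced map on cohomology has infinite rank, or better is injective.

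The step I expect to be the main obstacle is controlling the effect of $\mu^*$ on cohomology in view of the fact that $\mu$ is only almost one-to-one, being two-to-one over the countable union of $\Z$-orbits of cut points $a=(a^+,a^-)$. The concern is that functions which are coboundaries upstairs in $\CC(\Xi^0,\R)$ might arise from functions that are not coboundaries downstairs, so that $\mu^*$ could kill cohomology classes. The natural way to handle this is to produce a one-sided inverse at the level of cohomology: since $\mu$ is a continuous surjection that is injective off a meager $\Z$-invariant set, one can try to define a transfer or averaging map $\CC(\Xi^0,\R)\to\CC(S^1,\R)$ — for instance by restricting to the complement of the cut-point orbits and extending — which is $\Z$-equivariant and satisfies that the composite with $\mu^*$ is (cohomologous to) the identity on $H^1(S^1)$. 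Establishing that such a splitting is well defined and continuous, i.e.\ that the double points genuinely do not interfere with the cohomology-level injectivity of $\mu^*$, is the technical crux; once it is in place, the infinite generation of the tangential cohomology of the Kronecker foliation transfers immediately to the weak tiling cohomology and the theorem follows.
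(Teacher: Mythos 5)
Your overall strategy coincides with the paper's up to the crux: reduce to $H^1(\Z,\CC(\Xi^{0},\R))$, compare with the rotation system via $\mu^*$, and invoke Mostow's theorem \cite{Mostow76}. But the step you defer as the ``technical crux'' --- injectivity of $\mu^*$ on $H^1$ --- is the entire mathematical content of the proof, and the mechanism you propose for it does not work. There is no $\Z$-equivariant continuous transfer $\CC(\Xi^{0},\R)\to\CC(S^1,\R)$ obtained by ``restricting to the complement of the cut-point orbits and extending'': the cut points form a \emph{dense} subset of $S^1$ (a countable union of orbits of the irrational rotation, each dense), and for a typical $f\in\CC(\Xi^{0},\R)$ --- already for the indicator function of a clopen acceptance zone, $\Xi^{0}$ being a Cantor set --- the pushed-forward function has unequal one-sided limits $f(a^+)\neq f(a^-)$ at every cut point $a$ of some boundary orbit, hence admits no continuous extension; averaging the two values at double points does not restore continuity either. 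More structurally, $\Xi^{0}$ is totally disconnected while $S^1$ is connected, so no reasonable cochain-level one-sided inverse of $\mu^*$ can exist, and a splitting merely at the level of cohomology is precisely what has to be proved, not an available tool.

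The paper gets injectivity without any transfer. It forms the $\Z$-equivariant short exact sequence $0\to \CC(S^1,\R)\stackrel{\mu^*}{\to}\CC(\Xi^{0},\R)\to Q\to 0$ and proves $H^0(\Z,Q)=0$; the long exact sequence in group cohomology then gives $0\to H^1(\Z,\CC(S^1,\R))\to H^1(\Z,\CC(\Xi^{0},\R))$, which is the desired injectivity. The vanishing of $H^0(\Z,Q)$ is shown with the jump functionals $\tau_a(f)=f(a^+)-f(a^-)$, which annihilate $\mu^*(\CC(S^1,\R))$ and hence descend to $Q$: a nontrivial invariant class in $Q$ would force $|\tau_a(f\circ\varphi^n)|$ to stay bounded below uniformly in $n\in\Z$, and approximating $f$ uniformly by a locally constant $g$ would then force $g$ to have infinitely many jumps, contradicting the fact that a locally constant function on the compact set $\Xi^{0}$ has only finitely many. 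This approximation-by-locally-constant-functions argument (or some substitute for it) is the idea missing from your proposal; without it the claim that $\mu^*$ does not kill cohomology classes remains unestablished, so as written the proof has a genuine gap at its decisive step.
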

\begin{proof} By the last theorem and the above remark the first weak tiling cohomology is isomorphic to
$H^1(\Z,\CC(\Xi^{0},\R))$, the cohomology of the group $\Z$ with values in $\CC(\Xi^{0},\R)$. 
The pull back of the factor map $\mu:\Xi^{0}\to S^1$ gives rise to a $\Z$-equivariant short exact sequence of $\Z$-modules 
$$ 0\to \CC(S^1,\R) \stackrel{\mu^*}\longrightarrow \CC(\Xi^{0},\R) \to Q\to 0$$ where $Q=\CC(\Xi^{0},\R)/\CC(S^1,\R)$
naturally inherits a $\Z$-action.
We claim that $H^0(\Z,Q)=0$ so that 
the long exact sequence in group cohomology contains the short exact sequence
$$ 0 \to H^1(\Z, \CC(S^1,\R)) \to H^1(\Z, \CC(\Xi^{0},\R)) \to 
H^1(\Z, Q)\to 0.$$
We prove the claim:
For a pre-image of a cut point $a=(a^+,a^-)\in \Xi^{0}$ let
$\tau_a:\CC(\Xi^{0},\R)\to \R$ be given by $\tau_a(f) = f(a^+)
-f(a^-)$. 
This is a module homomorphism which vanishes on functions of 
$\mu^*(\CC(S^1,\R))$ 
and hence extends to a homomorphism $\tau_a:Q\to\R$. $H^0(\Z,Q)$ is
the group of invariant elements. 
Assuming that $H^0(\Z,Q)$ is non-trivial
suppose that $f\in \CC(\Xi^{0},\R)$ projects onto a non-trivial invariant element in $Q$. Then there exists an $a$ such that $\tau_a(f)=\tau_a(f\circ\varphi)\neq 0$. Choose $0<4 \epsilon<|\tau_a(f)|$. There exists $g\in \CC_{lc}(\Xi^{0},\R)$ such that $\|f-g\|_\infty<\epsilon$. 
This implies that $|\tau_a(f\circ\varphi^n)-\tau_a(g\circ\varphi^n)|<2\epsilon$, for all $n\in\Z$. Hence     $|\tau_a(f)-\tau_a(g\circ\varphi^n)|<2\epsilon<\frac{1}{2}|\tau_a(f)|$, for all $n\in\Z$, and therefore  $|\tau_a(g\circ\varphi^n)|>\frac{1}{2}|\tau_a(f)|$, for all $n\in\Z$.
It follows that $g$ has infinitely many jumps. But by compactness of
$\Xi^{0}$ the elements of $\CC_{lc}(\Xi^{0},\R)$ have only finitely
many jumps. This being a contradiction we must have
$H^0(\Z,Q)=0$. 

Hence $H^1(\Z,\CC(\Xi^{0},\R))$ is an extension of $H^1(\Z,Q)$ by
$H^1(\Z,\CC(S^1,\R))$. 
Now by the results of Mostow \cite{Mostow76}[page 98f], $H^1(\Z,\CC(S^1,\R))$ is
infinitely generated, as $\theta$ is irrational.
\end{proof}

%%%%%%%%%%%%%%%%%%%%%%%%%%%%%%%%%%%%%%%%%%%
\subsection{The mixed group in PV-cohomology}
%%%%%%%%%%%%%%%%%%%%%%%%%%%%%%%%%%%%%%%%%%%

There are two approaches to the deformation theory of tilings. The first
is based on the description of tiling cohomology as the direct limit 
$\lim_{\to}H((\Gamma_l,\R^d)$ and asymptotic negligible cocycles  \cite{CS}, and the
second on pattern equivariant 1-forms \cite{K2}. In the context of the second approach a 
deformation of a Delone set $\p$ into a Delone set $\p'$ is given by a bi-Lipschitz smooth function
$\varphi:\R^d\longrightarrow\R^d$ which has a strongly pattern
equivariant differential and satisfies $\p'=\varphi(\p)$. Moreover, if two such functions differ on $\p$ by a constant vector  (which amounts to a global translation) then they define the same deformation. 
The deformations of $\p$ are therefore parameterized by the
elements of $Z_{s-\p}^1(\R^d,\R^d)/N_{s-\p}^1(\R^d,\R^d)$ where
$$N_{s-\p}^1(\R^d,\R^d)=\{d\varphi\in B_{s-\p}^1(\R^d,\R^d)| \varphi \mbox{ is
constant on }\p \}.$$ 
Deformations of tilings can be seen as deformations of their set of punctures.

Denote by $\mathcal{B}_{||.||_\infty}(\omega,\varepsilon)$ the open
$\varepsilon$-ball around an element $\omega\in
Z_{s-\p}^1(\R^d,\R^d)$ w.r.t. the uniform norm. For sufficiently small $\epsilon$ the elements of
$Z_{s-\p}^1(\R^d,\R^d)\cap
\mathcal{B}_{||.||_\infty}(d\mbox{id},\varepsilon)$ define
invertible deformations (in sense of \cite{K2}) of $\p$.
If two such elements differ by an element of
$B_{s-\p}^1(\R^d,\R^d)$ then their images on $\p$ are mutually locally derivable.
If their difference is in $B_{w-\p}^1(\R^d,\R^d)$ then their images on $\p$ define pointed topological conjugate dynamical systems. The elements
near the class of $d$id in the mixed quotient group
$$Z_{s-\p}^1(\R^d,\R^d)/B_{w-\p}^1(\R^d,\R^d)\cap
Z_{s-\p}^1(\R^d,\R^d)$$ parameterize therefore the set of  small deformations modulo
bounded deformations which are in the same pointed conjugacy class.

The following corollary yields a description of  
the parameter space of deformations modulo topological conjugacy in terms of a mixed PV-cohomology group. We denote by $Z_{PV}^*(\Gamma_0,-)$ and $B_{PV}^*(\Gamma_0,-)$ PV coycles and PV coboundaries, respectively.
\begin{cor} The pattern equivariant mixed quotient 
group $Z_{s-\p}^*(\R^d,\R^d)/B_{w-\p}^*(\R^d,\R^d)\cap
Z_{s-\p}^*(\R^d,\R^d)$ is isomorphic to
$$Z_{PV}^*(\Gamma_0,\CC_{lc}(\Xi,\R^d))/B_{PV}^*(\Gamma_0,\CC(\Xi,\R^d))\cap
Z_{PV}^*(\Gamma_0,\CC_{lc}(\Xi,\R^d)).$$
\end{cor}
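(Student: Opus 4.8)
The plan is to use the explicit chain map $\iso=\PEPV\circ J$ produced by the corollary establishing $\iso=\PEPV\circ J$, but to argue at the level of cochains rather than cohomology, since the mixed group is built from cocycles and coboundaries and not from cohomology classes alone. First I would reduce to real coefficients: writing $\R^d=\R^{\oplus d}$ and using that $J$, the $\alpha_l$, $\PEPV$ and all the homotopy maps are $\R$-linear and natural in the coefficient group, every assertion below is applied componentwise, so it suffices to treat $A=\R$ and take direct sums. I would then record the two facts that make $\iso$ respect the mixed structure: $\iso$ is a chain map (Lemma~\ref{lem-J} together with Corollary~\ref{cor-PEPV}, the latter showing that $\PEPV$ intertwines the differentials) which sends strongly PE forms to strong PV cochains and weakly PE forms to weak PV cochains. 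Consequently $\iso$ maps $Z_{s-\p}^*(\rn,\R^d)$ into $Z_{PV}^*(\Gamma_0,\CC_{lc}(\Xi,\R^d))$ and $B_{w-\p}^*(\rn,\R^d)$ into $B_{PV}^*(\Gamma_0,\CC(\Xi,\R^d))$, so it descends to a well-defined homomorphism $\bar\iso$ between the two mixed quotient groups; it then remains to prove $\bar\iso$ bijective.

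For surjectivity I would exhibit a section. The essential point is that $J$ admits the continuous, strong- and weak-preserving right inverse $\alpha=(\alpha_l)$ of Lemma~\ref{lem-al}, with $J\circ\alpha=\mathrm{id}$, while $\PEPV$ is a genuine cochain bijection (Lemma~\ref{cont}) restricting to the strong cochains. Hence $\Psi:=\alpha\circ\PEPV^{-1}$ is a chain map preserving both the strong and weak versions and satisfying $\iso\circ\Psi=\mathrm{id}$. Given a strong PV cocycle $g\in Z_{PV}^*(\Gamma_0,\CC_{lc}(\Xi,\R^d))$, the form $\omega:=\Psi(g)$ is strongly PE and closed, since $d\omega=\Psi(d_{PV}g)=0$, and satisfies $\iso(\omega)=g$, so $\bar\iso[\omega]=[g]$.

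For injectivity I would invoke the controlled Poincar\'e machinery already set up. Suppose $\omega\in Z_{s-\p}^*(\rn,\R^d)$ with $\iso(\omega)$ a weak PV coboundary. Since $\PEPV^{-1}$ is a bijective chain map carrying weak PV coboundaries to weak PE simplicial coboundaries, $J(\omega)=\PEPV^{-1}(\iso(\omega))=d_S c$ for some weakly PE cochain $c$. As $\omega$ is strongly hence weakly PE and closed, and $c$ is weakly PE, the weak version of the last Lemma of Section~3 (the implication that a closed form $\omega$ with $J_l(\omega)=d_S c$ admits a primitive $\tau$ of the same regularity class) produces a weakly PE form $\tau$ with $\omega=d\tau$. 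Thus $\omega\in B_{w-\p}^*(\rn,\R^d)\cap Z_{s-\p}^*(\rn,\R^d)$ and $[\omega]=0$ in the de Rham mixed group. Combining the two steps, $\bar\iso$ is an isomorphism, which is the assertion.

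The step I expect to be the main obstacle is injectivity, precisely because $J$ is not injective on cochains and is only a cohomology---not a cochain---isomorphism, so one cannot simply transport the intersection $B_{w-\p}^*\cap Z_{s-\p}^*$ across $\iso$. What rescues the argument is that every map used to invert $J$ in cohomology, namely $\alpha$ and the homotopies $h^{U,u}$, $\beta^r$, $\gamma^r$ built from Lemma~\ref{lem-h} and the construction that follows it, was arranged to send strongly PE data to strongly PE data and weakly PE data to weakly PE data. This is exactly what guarantees that a strongly PE closed form whose image is weakly exact is itself weakly exact through a primitive of the correct weak class, so that it vanishes in the mixed quotient. I would only need to check the degree-$0$ case separately, where $B_{w-\p}^0=0$ and the claim reduces to the componentwise restriction of $\PEPV$ to $\ker d_{PV}$ on $\CC_{lc}(\Xi^0,\R^d)$.
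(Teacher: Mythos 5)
Your proof is correct, but it takes a genuinely different route from the paper's, which is a purely cohomological restriction argument: since $Z_{s-\p}^*(\R^d)\subset Z_{w-\p}^*(\R^d)$, the mixed quotient embeds canonically into $H^*_{w-\p}(\R^d)$ (and likewise the PV mixed quotient into $H^*_{PV}(\Gamma_0,\CC(\Xi,\R))$), so the paper simply restricts $H^*(\iso)$ to these subgroups: injectivity of the restriction is inherited from injectivity of $H^*(\iso)$ on weak cohomology (Theorem~\ref{thm-deRham} combined with Corollary~\ref{cor-PEPV}), and surjectivity follows from surjectivity of $H^*(\iso)\colon H^*_{s-\p}(\R^d)\to H^*_{PV}(\Gamma_0,\CC_{lc}(\Xi,\R))$ together with $\iso(B^*_{w-\p}(\R^d))\subset B^*_{PV}(\Gamma_0,\CC(\Xi,\R))$. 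You never make this embedding observation; instead you work at the cochain level and effectively inline the two facts the paper quotes as black boxes. Your section $\Psi=\alpha\circ\PEPV^{-1}$ is legitimate: $\alpha$ is a chain map preserving s-PE and w-PE cochains by Lemma~\ref{lem-al}, $\PEPV$ is a cochain isomorphism in both the strong and weak settings by Lemma~\ref{cont}, and $J\circ\alpha=\mathrm{id}$ yields $\iso\circ\Psi=\mathrm{id}$ on the nose --- which buys something the paper's argument does not state, namely exact lifting of strong PV cocycles with no correction by a strong PV coboundary, a correction that is implicit in the paper's surjectivity step. Your injectivity argument is a direct appeal to the unlabelled lemma closing Section~4 (note: it is in Section~4, not Section~3 as you say), and that is precisely the lemma by which the paper establishes the weak half of Theorem~\ref{thm-deRham}; so in substance the two injectivity arguments coincide, yours with the black box opened. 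Two further points are in your favour: the componentwise reduction $\R^d=\R^{\oplus d}$ is sound and is silently used by the paper, whose proof is written with $\R$-coefficients although the statement has $\R^d$-coefficients; and your flagging of degree $0$, where the primitive lemma requires $l\geq 1$, is a genuine care point the paper glosses over (there the check is trivial, since $B^0=0$ on both sides, closed $0$-forms on the connected space $\R^d$ are constants, and your $\Psi$ still provides a right inverse). The trade-off is economy: the paper's embedding trick disposes of the corollary in three lines by reusing the cohomology isomorphisms wholesale, while your cochain-level version is longer but more explicit about where the strong/weak-preserving structure of $\alpha$, $h^{U,u}$, $\beta^r$, $\gamma^r$ actually enters.
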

\pn\textbf{\emph{Proof.}} $H^*(\iso):H_{w-\p}^*(\R^d)\to
H_{PV}^*(\Gamma_o,\CC(\Xi,\R))$ 
is injective and hence its restriction to 
$Z_{s-\p}^*(\R^d)/B_{w-\p}^*(\R^d)\cap Z_{s-\p}^*(\R^d)\subset H_{w-\p}^*(\R^d)$ is injective as well.
$H^*(\iso):H_{s-\p}^*(\R^d)\to H_{PV}^*(\Gamma_o,\CC_{lc}(\Xi,\R))$ is surjective and $\iso(B_{w,\p}^*(\R^d))\subset B_{PV}^*(\Gamma_0,\CC(\Xi,\R))$. It follows that the restriction of $H^*(\iso)$ to 
$Z_{s-\p}^*(\R^d)/B_{w-\p}^*(\R^d)\cap Z_{s-\p}^*(\R^d)$ is surjective.

\qed

\addcontentsline{toc}{section}{}

\end{document}